\def\stackreb#1#2{\ \mathrel{\mathop{#1}\limits_{#2}}}
\def\II{\hbox{{1}\kern-.25em\hbox{l}}} 
\newcommand{\beq}{\begin{equation}}\newcommand{\eeq}{\end{equation}}
\newcommand{\baq}{\begin{eqnarray}}\newcommand{\eaq}{\end{eqnarray}}
\newcommand{\R}{\mathbb R}
\newcommand{\T}{\mathbb T}
\newcommand{\eg}{\Gamma}
\newtheorem{theorem}{Theorem}
\newtheorem{conjecture}{Conjecture}
\begin{document}

\title[Rarefied elliptic hypergeometric functions]
{Rarefied elliptic hypergeometric functions}

\author{V.\,P. Spiridonov}
 \address{Bogoliubov Laboratory of Theoretical Physics,
JINR, Dubna, Moscow region, 141980 Russia and
 St. Petersburg Department of the Steklov Mathematical Institute
of Russian Academy of Sciences,
Fontanka 27, St. Petersburg, 191023 Russia
  {\sc e-mail:  spiridon@theor.jinr.ru}}

\begin{abstract}
Two exact evaluation formulae for multiple rarefied elliptic beta
integrals related to the simplest lens space are proved. They generalize
evaluations of the type I and II elliptic beta integrals attached to the
root system $C_n$. In a special $n=1$ case, the simplest $p\to 0$ limit is
shown to lead to a new class of $q$-hypergeometric identities. Symmetries of
a rarefied elliptic analogue of the Euler-Gauss hypergeometric function are described
and the respective generalization of the hypergeometric equation is constructed.
Some extensions of the latter function to $C_n$ and $A_n$ root systems and
corresponding symmetry transformations are considered. An application of the
rarefied type II $C_n$ elliptic hypergeometric function to some eigenvalue
problems is briefly discussed.
\end{abstract}

\maketitle

{\em Key words: elliptic hypergeometric functions, elliptic functions,
elliptic hypergeometric equation, root systems}

\tableofcontents

\hfill{\em  To the memory of G.\,M. Vereshkov, my teacher}

\medskip

\section{Introduction}

Hypergeometric functions are central objects in the theory of special
functions \cite{aar}. Elliptic functions (i.e., meromorphic
doubly-periodic functions) form another key family from this world.
Nowadays it is known that these two classical sets of functions
are deeply tied to each other.
Frenkel and Turaev \cite{FT} investigated elliptic functions that
appeared as solutions of the Yang-Baxter equation in \cite{djkmo}
and have shown that they have the form resembling hypergeometric
series and obey similar properties. This has led to the discovery
of remarkable terminating elliptic hypergeometric series summation
and transformation formulae (infinite series of such
type do not converge) generalizing the corresponding Jackson sum
and Bailey transformation \cite{aar,GR}. The biorthogonal functions
expressed in terms of such series have been constructed in \cite{SZ}
(discrete measure) and \cite{spi:theta2} (continuous measure).

The genuine elliptic hypergeometric functions
which are transcendental over the field of elliptic functions were
discovered in \cite{spi:umn}. They are determined by a specific
class of integrals whose integrands satisfy linear $q$-difference equations
with $p$-elliptic coefficients, and this property can be used for a general
definition of such functions \cite{spi:theta2}. Elliptic hypergeometric
integrals can be reduced to elliptic hypergeometric series (particular
elliptic functions) through residues calculus, to general class of
$q$-hypergeometric functions (by taking a simple limit $p\to 0$
\cite{spi:umn}, or by more complicated degenerations \cite{rai:limits})
and to ordinary hypergeometric functions.
Unification of elliptic and hypergeometric insights elucidated various
previously known properties of the corresponding functions. For instance,
it explained the origin of hypergeometric notions of well-poisedness,
very-well-poisedness and balancing in terms of the ellipticity conditions
\cite{spi:umnrev}.
The unique nature of the most interesting elliptic hypergeometric
functions is established by their symmetries associated with two
independent elliptic curves and two independent root systems, or compact
Lie groups (one attached to the Haar measure defining multiple integrals
and another one living in the space of free parameters of these functions).
The basics of the theory of elliptic hypergeometric functions is surveyed
in \cite{spi:umnrev}, a more recent review is given in \cite{rosengren}.

The elliptic beta integral \cite{spi:umn} is until now the only known computable
integral among univariate elliptic hypergeometric integrals. Its evaluation
formula (see identity \eqref{ebeta}) can be interpreted as an elliptic analogue
of Newton's binomial theorem. Moreover, this integral represents a top known
generalization of Euler's beta function and serves as the biorthogonality
measure for particular elliptic functions (actually, the product of two elliptic
functions with different nomes \cite{spi:theta2}) forming the most general
set of special functions extending the Jacobi and Askey-Wilson
polynomials \cite{aar}, etc.
It has found remarkable applications in theoretical physics,
the first one being in quantum mechanical eigenvalue problems
\cite{spi:thesis,spi:tmf2007}.
The most important physical interpretation of formula \eqref{ebeta}
was discovered by Dolan and Osborn in quantum field theory \cite{DO}
-- it proves the equality of superconformal indices of two nontrivial four
dimensional supersymmetric models connected by the Seiberg duality.
Currently this gives the most rigorous mathematical confirmation of
the confinement phenomenon.

The next important representative of univariate elliptic hypergeometric
integrals is an elliptic analogue of the Euler-Gauss hypergeometric function.
It contains two more free parameters than the elliptic beta integral
and satisfies an elliptic hypergeometric equation \cite{spi:thesis,spi:tmf2007}.
It appeared for the first time in \cite{spi:theta2} together with a
nontrivial symmetry transformation related to the exceptional root
system $E_7$ (an identification with the corresponding Weyl group
action was established in \cite{rai:trans}).

Another line of generalizations of the elliptic beta integral
considers multiple integrals. Here, the very first multiple
elliptic beta integrals were defined by van Diejen and the present
author \cite{vDS1,vDS2} in relation to the root system $C_n$.
The integrals defined in \cite{vDS2}
are referred to as of type I, and those of \cite{vDS1}
as of type II. The latter integral is a generalization of
the Selberg integral \cite{Selberg}, which follows from its simple
$p\to0$ reduction to a Gustafson integral with previously known reductions
\cite{gust1,gust2}. The importance of Selberg's multiple beta integral
and its generalizations is surveyed in \cite{for-war}.
The classification of integrals as type I or II is inspired by differences
in the methods used for proving corresponding exact evaluation formulae
and in the number of free parameters contained in them (which depends on the rank
of the root system for type I and is fixed for type II integrals). The type II
integral evaluation was proven to follow from the type I integral identity as a result
of simple, purely algebraic considerations \cite{vDS2}. However, the type I
integral evaluation was not completely proven in \cite{vDS2}, it depended on
a vanishing condition of a certain integral. The first complete proof of this
integration formula was given by Rains in \cite{rai:trans}, and shortly
thereafter the present author found an elementary proof of the type I integral
evaluations \cite{spi:short} (this method was used also for considering
other integrals in \cite{SW} and we employ it here as well).
Nowadays, the number of known elliptic hypergeometric integrals
admitting (proven or conjectural) either exact evaluation or a nontrivial
symmetry transformation described by the Weyl groups of various root systems is
very large, see, e.g. \cite{bult:trafo,bult:quadr,rai:trans,spi:umnrev,SV1,SV2,SW}.
In this paper we crucially follow the logic established in \cite{vDS1,vDS2} and
use it for a derivation of elliptic hypergeometric identities of a new type.

The discovery of relations between superconformal indices
and elliptic hypergeometric functions \cite{DO} has attracted much attention.
In addition to its systematic consideration in \cite{SV1,SV2}, which resulted in
the formulation of very many new mathematical conjectures and the discovery of
new physical Seiberg dualities, there have been other important developments.
For instance, such integrals emerged in two-dimensional topological field
theories \cite{gprr},  their properties describe symmetry enhancement phenomena
\cite{DG}, the elliptic Fourier transform introduced in \cite{spi:bailey2}
in the rank 1 case and extended to arbitrary rank root systems in
\cite{SW} plays an
important role in the discussion of five dimensional duality questions \cite{GK}, etc.
For a recent survey of this subject, see \cite{RR2016}.

In this setting, the standard elliptic hypergeometric integrals are related to the
Hopf manifold $S^1\times S^3$, which plays a role of compact space-time
for the corresponding four dimensional superconformal field theories.
However, this is only one of many admissible four dimensional manifolds for which
one can compute superconformal indices. The next level of topological
complication is related to the replacement of the $S^3$-factor by the
lens space. It was considered first in \cite{ben}, where an analogue
of the elliptic gamma function for the simplest lens space was introduced.
Some further essential developments of this subject can be found in \cite{RW}.

Recently, Kels \cite{kels} proposed an extension of the univariate elliptic beta
integral associated with the simplest lens space. It involves
some additional discrete variables and a replacement of the single
integration by a finite sum of integrations. Earlier examples
of similar sums of simpler integrals can be found in \cite{BMS,kels0,N}.
In particular, they emerge already in the representation theory of $\textrm{SL}(2,\mathbb{C})$
group \cite{N}. In this work we consider elliptic hypergeometric functions associated
with the lens space by a different method and confirm
the result derived in \cite{kels}. Moreover, we find another similar extension of
formula  \eqref{ebeta} and show that it has a highly nontrivial $p\to 0$
limit leading to a new type of $q$-hypergeometric identities.
Furthermore, we propose two explicit multiple hybrid sum-integrals
generalizing the type I and II elliptic beta integrals of \cite{vDS1,vDS2}.
It will be shown that the general class of such functions, which we
call ``rarefied elliptic hypergeometric functions", matches with the general
definition of elliptic hypergeometric functions of \cite{spi:theta2}
when it is applied to the case of sums of integrals.
As follows from their structure, these functions should be considered as
$2n$-variate ``sum-integral" objects. One set of $n$ variables is discrete
and defines the $n$-tuple summation and another set of $n$ continuous variables
defines the $n$-tuple integration.

We define a rarefied elliptic analogue of the Euler-Gauss hypergeometric function
and construct the corresponding $W(E_7)$ symmetry transformations and elliptic
hypergeometric equation. The type II extension of the latter function to the root
systems $C_n$ is proposed and an extension of Rains' transformation \cite{rai:trans}
is conjectured. An application of this function to the eigenvalue problem for a
particular finite-difference operator is briefly discussed. We consider also symmetry
transformations for the rarefied multiple elliptic hypergeometric functions of type I
on the root systems $C_n$ and $A_n$. In the concluding section we outline some
prospects for further development of the derived results.

\section{The elliptic beta integral}

In this section we describe particular elliptic hypergeometric integrals
introduced in \cite{vDS1,vDS2,spi:umn}.
For  $p\in\mathbb{C}$, $|p|<1$, we define the infinite product
$$
(z;p)_\infty=\prod_{j=0}^\infty(1-zp^j), \quad z\in \mathbb{C}.
$$
The theta function
\beq
\theta(z;p)=(z;p)_\infty(pz^{-1};p)_\infty, \quad z\in\mathbb{C}^*,
\label{thetafn}\eeq
obeys the following symmetry properties
$$
\theta(x^{-1};p)=\theta(px;p)=-x^{-1}\theta(x;p).
$$
We shall need the general quasiperiodicity relation
$$
\theta(p^kz;p)=(-z)^{-k}p^{-\frac{k(k-1)}{2}}\theta(z;p), \quad k\in\mathbb{Z}.
$$
The ``addition law" for theta functions has the form
\beq
\theta(xw^{\pm 1},yz^{\pm 1};p) -\theta(xz^{\pm 1},yw^{\pm 1};p)
=yw^{-1}\theta(xy^{\pm 1},wz^{\pm 1};p),
\label{addition}\eeq
where $x,y,w,z\in\mathbb{C}^*$.
We use the convention
$$
\theta(x_1,\ldots,x_k;p)=\prod_{j=1}^k\theta(x_j;p), \qquad
\theta(tx^{\pm1};p)=\theta(tx,tx^{-1};p).
$$

For arbitrary $q\in \mathbb{C}$ and $n\in\mathbb{Z}$,  the
elliptic Pochhammer symbol is defined as
$$
\theta(x;p|q)_n:=\begin{cases}
 \prod_{j=0}^{n-1}\theta(xq^j;p), & \text{for $n>0$} \\
\prod_{j=1}^{-n} \theta(xq^{-j};p)^{-1}, & \text{for $n<0$}
\end{cases}
$$
and $\theta(x;p|q)_0=1$.

The first order $q$-difference equation
\beq
f(qz;p,q)=\theta(z;p)f(z;p,q), \qquad q\in \mathbb{C}^*,
\label{egeqn}\eeq
has a particular solution
\begin{equation}
f(z;p,q)=\Gamma(z;p,q):=
\prod_{j,k=0}^\infty\frac{1-z^{-1}p^{j+1}q^{k+1}}{1-zp^{j}q^{k}},
\qquad |p|, |q|<1,\quad z\in\mathbb{C}^*,
\label{eg}\end{equation}
called the (standard) elliptic gamma function. Note that equation
\eqref{egeqn} does not require $|q|<1$, whereas solution \eqref{eg}
is restricted to this domain.

The function $\Gamma(z;p,q)$ is a building block for
elliptic hypergeometric integrals with an interesting history.
The problem of generalizing Euler's gamma function to a similar function
``of the second order" was considered by Alexeevski long ago \cite{A}.
Later Barnes derived similar results, but he went further and defined
multiple gamma functions of arbitrary order \cite{bar:multiple}.
Jackson \cite{jac:basic} considered this problem from a different angle
and investigated the well-known $q$-gamma function and defined the double
$(p, q)$-gamma function. In the same work he constructed the elliptic
gamma function with equal periods $p=q$ and indicated how to construct
the general case, however, his results did not attract the deserved attention.
This function was implicitly discovered also by Baxter in \cite{Bax},
since the partition function of the eight-vertex model is given by
a particular combination of four such functions (see \cite{fel-var:elliptic}
for an explicit relation). More recently function \eqref{eg} was
considered by Ruijsenaars in \cite{rui:first}, where the term ``elliptic
gamma function" was introduced. A systematic investigation of this function
was performed by Felder and Varchenko \cite{fel-var:elliptic} who discovered
its $\textrm{SL}(3,\mathbb{Z})$ symmetry transformations and gave a
cohomological interpretation. In \cite{spi:theta2} the author
constructed the modified elliptic gamma function, which gives
a solution of equation \eqref{egeqn} in the regime $|q|=1$
(it is meromorphic in $\log z$, not $z$) and has nice physical
applications \cite{SV3}. In \cite{FR}, Friedman
and Ruijsenaars explicitly expressed the
standard elliptic gamma function \eqref{eg} as a particular combination of
four Barnes gamma functions of the third order
(such a relation was suggested also earlier in \cite{spi:theta2} up to
the exponential of a Bernoulli polynomial factor).

The elliptic gamma function \eqref{eg} has the following properties
$$
\Gamma(z;p,q) =\Gamma(z;q,p), \qquad
$$
$$
\Gamma(qz;p,q)=\theta(z;p)\Gamma(z;p,q), \quad
\Gamma(pz;p,q)=\theta(z;q)\Gamma(z;p,q),
$$
\beq\label{inv1}
\Gamma(z;p,q)\Gamma(\textstyle{\frac{pq}{z}};p,q) =1,
\eeq
$$
 \Gamma(\sqrt{pq};p,q)=1,
$$
 \beq\label{resid}
\stackreb{\lim}{z\to 1}(1-z) \Gamma(z;p,q)=\frac{1}{(p;p)_\infty(q;q)_\infty}.
 \eeq
Its poles and zeros form the double base geometric progressions
\beq
z_{\mathrm{poles}}=p^{-j}q^{-k},\quad
z_{\mathrm{zeros}}=p^{j+1}q^{k+1},\quad j,k\in\mathbb{Z}_{\geq 0}.
\label{zp_egf}\eeq

The elliptic Pochhammer symbol can be written in the form
$$
\theta(z;p|q)_m=\frac{\Gamma(zq^m;p,q)}{\Gamma(z;p,q)}, \quad m\in\mathbb{Z}.
$$

We also define the elliptic gamma function of the second order
$$
\Gamma(z;p,q,t)=\prod_{j,k,l=0}^\infty (1-zp^jq^kt^l)
(1-z^{-1} p^{j+1}q^{k+1}t^{l+1}), \quad |t|,|p|,|q|<1,\; z\in\mathbb{C}^*.
$$
It satisfies the equation
\beq
\Gamma(qz;p,q,t)=\Gamma(z;p,t)\Gamma(z;p,q,t)
\label{2ndordereqn}\eeq
and its partners obtained by permutation of the bases $p,q,t$
and the inversion relation
$$
 \Gamma(pqtz;p,q,t)=\Gamma(z^{-1};p,q,t).
$$

The elliptic beta integral evaluation formula, which serves as a basis
for the whole general theory of elliptic hypergeometric integrals,
has the following form \cite{spi:umn}.

\begin{theorem} Let $t_1, \dots ,t_6,p,q \in \mathbb{C}^*$ such that
$|t_a|,|p|,|q| <1$ and $\prod_{a=1}^6 t_a=pq$. Then
\begin{equation} \label{ebeta}
\frac{(p;p)_\infty (q;q)_\infty}{2} \int_{\mathbb{T}} \frac{\prod_{a=1}^6
\Gamma(t_a z^{\pm 1} ;p,q)}{\Gamma(z^{\pm 2};p,q)} \frac{dz}{2 \pi \textup{i} z}
= \prod_{1 \leq a < b \leq 6} \Gamma(t_a t_b;p,q),
\end{equation}
where $\mathbb T$ is the positively oriented unit circle.
\end{theorem}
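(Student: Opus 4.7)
Denote the left-hand side of \eqref{ebeta} by $I(t_1,\ldots,t_6)$ and the right-hand side by $J(t_1,\ldots,t_6)$, and write $\rho(z)=\prod_{a=1}^6\Gamma(t_az^{\pm1};p,q)/\Gamma(z^{\pm2};p,q)$ for the integrand kernel. Under the hypothesis $|t_a|<1$, the $z$-poles of $\rho$ arising from the factors $\Gamma(t_az^{-1};p,q)$ (at $z=t_a p^jq^k$) lie strictly inside $\mathbb T$ and those from $\Gamma(t_az;p,q)$ (at $z=t_a^{-1}p^{-j}q^{-k}$) lie strictly outside, so $I$ is holomorphic in the parameters on the open balanced locus $\{|t_a|<1,\ \prod_a t_a=pq\}$. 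The plan is to prove that the normalized ratio $F:=I/J$ is identically $1$.

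The central step is to derive an $S_6$-symmetric parameter difference equation for $I$. Using the theta addition law \eqref{addition} (applied with arguments built out of $z$ and a chosen pair $(t_j,t_k)$), together with the recursion $\Gamma(qx;p,q)=\theta(x;p)\Gamma(x;p,q)$, one should obtain a telescoping identity of the form
$$\rho^{(jk)}(z)-\rho(z)=h(qz)-h(z),$$
where $\rho^{(jk)}$ is $\rho$ with $(t_j,t_k)$ replaced by $(qt_j,t_k/q)$, and $h$ is a meromorphic function whose $z$-poles in the annulus $|q|\le|z|\le1$ can be controlled. Integrating over $\mathbb T$ and performing $z\mapsto qz$ in the shifted piece deforms the contour back to $\mathbb T$ without crossing poles of $h$, so the right-hand side integrates to zero; hence $I(\ldots,qt_j,\ldots,t_k/q,\ldots)=I(\ldots,t_j,\ldots,t_k,\ldots)$. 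The manifest $S_6$-symmetry of $I$ extends the identity to every pair $(j,k)$, and the $p\leftrightarrow q$ symmetry of $\Gamma$ yields the analogous identity with $q$ replaced by $p$.

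A direct computation using $\Gamma(qx;p,q)=\theta(x;p)\Gamma(x;p,q)$ shows that $J$ transforms in exactly the same way under these shifts, where the balancing condition $\prod_a t_a=pq$ enters crucially to cancel a residual theta quotient. Therefore $F$ is invariant under $(t_j,t_k)\mapsto(qt_j,t_k/q)$ and its $p$-analogue for every $(j,k)$. For generic $p,q$ the orbits of the group generated by these shifts are dense in the balanced parameter torus, which, combined with the holomorphy of $F$, forces $F$ to be constant in $(t_1,\ldots,t_6)$ (and in $p,q$ by analytic continuation). The constant is then fixed by specialization: letting $t_5t_6\to1$ pinches a pair of $z$-poles of $\rho$ against the contour, and the residue-pinch asymptotics of $I$, computed from \eqref{resid} and \eqref{inv1}, match precisely the singularity of the factor $\Gamma(t_5t_6;p,q)$ in $J$, yielding $F\equiv1$.

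The main obstacle I expect is Step 2: assembling the telescoping identity $\rho^{(jk)}(z)-\rho(z)=h(qz)-h(z)$ explicitly from iterated applications of the three-term addition relation \eqref{addition}, and verifying that the contour deformation $\mathbb T\to q^{-1}\mathbb T$ crosses no poles of $h$. This requires choosing the auxiliary theta arguments so that the resulting $h$ has the symmetry needed to close the telescoping form, and tracking pole locations carefully in the annulus. The density/constancy argument is standard but needs attention near the boundary $|t_a|=1$, and the residue-pinch evaluation in the last step must be matched factor by factor against the singular behavior of $\Gamma(t_5t_6;p,q)$.
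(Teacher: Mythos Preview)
Your outline is the same strategy the paper employs (not for this theorem directly, which is only quoted from \cite{spi:umn}, but for its $C_n$ rarefied generalization in Section~5, following \cite{spi:short}; that proof specializes to the present statement at $n=r=1$). The architecture---telescoping identity for the integrand, contour deformation showing the parameter-shifted integral equals the original, density to force constancy, then a pinching residue to fix the constant---matches step for step.

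Two cosmetic differences: the paper divides the integrand by the right-hand side product \emph{before} setting up the difference equation, so constancy is for the normalized integral directly rather than for the ratio $I/J$; and the paper's telescoping is written as $g(p^{-1}z)-g(z)$ under the single shift $t_1\to pt_1$ (with $t_6$ absorbing the compensating $p^{-1}$ through the balancing condition), which is the $p$-version of your $(t_j,t_k)\mapsto(qt_j,q^{-1}t_k)$.

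The substantive gap is the one you yourself flag. The paper does not leave the telescoping identity as an expectation: it writes down the auxiliary function explicitly (formula \eqref{gCI}, which at $n=r=1$ reads
\[
g(z)=\rho(z)\,\frac{t_1\,\theta(t_1A;q)\prod_{a=1}^{5}\theta(t_az;q)}
{z\,\theta(z^2,Az;q)\prod_{a=2}^{5}\theta(t_1t_a;q)},\qquad A=\prod_{a=1}^5 t_a,
\]
with $p,q$ interchanged relative to your convention), and reduces the claimed identity $\rho(pt_1;z)-\rho(t_1;z)=g(p^{-1}z)-g(z)$ to the concrete theta relation \eqref{eqn-CI}, which after the substitution $t_a\to t_aq^{n_a}$, $z\to zq^m$ is a known elliptic-function identity. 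The pole analysis showing no crossings in the annulus $1\le|z|\le|p|^{-1}$ is then carried out explicitly. Until you produce your $h$ and verify both the identity and the pole-free annulus, what you have is the correct plan but not yet a proof; the paper's Section~5 shows exactly how to close it.
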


Here we use the compact notation
\begin{eqnarray*} &&
\Gamma(t_1,\ldots,t_n;p,q):=\Gamma(t_1;p,q)\ldots\Gamma(t_n;p,q), \quad
\Gamma(tz^{\pm k};p,q):=\Gamma(tz^k;p,q)\Gamma(tz^{-k};p,q), \quad
\\ &&
\Gamma(z^{\pm 1}w^{\pm1};p,q):=\Gamma(zw;p,q)\Gamma(z^{-1}w;p,q)
\Gamma(zw^{-1};p,q)\Gamma(z^{-1}w^{-1};p,q).
\end{eqnarray*}

If one substitutes in \eqref{ebeta} $t_6=pq/\prod_{a=1}^5 t_a$,
uses the inversion formula \eqref{inv1}, and takes the limit $p\to 0$
for fixed $t_1,\ldots, t_5$ and $q$, then one obtains the Rahman $q$-beta integral
\cite{Rahman} (see formula \eqref{rahman} below for $r=1$).

In \cite{vDS1,vDS2}, van Diejen and the present author proposed
two multiple generalizations
of the integration formula \eqref{ebeta} in relation with
the root system $C_n$. The type I integral has the following form.
Let $z_1,\ldots,z_n\in\T$ and complex parameters
$t_1,\ldots,t_{2n+4}$ and $p,q$ satisfy the constraints
$|p|, |q|, |t_a|<1$ and $\prod_{j=1}^{2n+4}t_a=pq$. Then
\begin{eqnarray}\nonumber
&& \kappa_n\int_{\T^n}\prod_{1\leq j<k\leq n}\frac{1}{\eg(z_j^{\pm 1} z_k^{\pm 1};p,q)}
\prod_{j=1}^n\frac{\prod_{a=1}^{2n+4}\eg(t_az_j^{\pm 1};p,q)}
{\eg(z_j^{\pm2};p,q)}\prod_{j=1}^n\frac{dz_j}{z_j}
\\ && \makebox[4em]{}
=\prod_{1\leq a<b\leq 2n+4}\eg(t_at_b;p,q), \qquad
\kappa_{n}=\frac{(p;p)_\infty^n(q;q)_\infty^n}{(4\pi \textup{i})^n n!}.
\label{C-typeI}\end{eqnarray}
This integral evaluation can be considered as a high level generalization of
a Dixon identity \cite{Dixon}.

The type II $C_n$-integral has a structurally different form.
Let complex parameters $t, t_a (a=1,\ldots , 6), p$, and $q$ satisfy the
conditions $|p|, |q|,$ $|t|,$ $|t_a| <1,$ and
$t^{2n-2}\prod_{a=1}^6t_a=pq$. Then
\begin{eqnarray}\nonumber &&
\kappa_n\int_{\T^n} \prod_{1\leq j<k\leq n}
\frac{\eg(tz_j^{\pm 1} z_k^{\pm 1};p,q)}{\eg(z_j^{\pm 1} z_k^{\pm 1};p,q)}
\prod_{j=1}^n\frac{\prod_{a=1}^6\eg(t_az_j^{\pm 1};p,q)}{\eg(z_j^{\pm2};p,q)}
\prod_{j=1}^n\frac{dz_j}{z_j}
\\ && \makebox[4em]{}
= \prod_{j=1}^n\left(\frac{\eg(t^j;p,q)}{\eg(t;p,q)}
\prod_{1\leq a<b\leq 6}\eg(t^{j-1}t_at_b;p,q )\right).
\label{C-typeII}\end{eqnarray}
The latter integral can be interpreted as an elliptic extension
of the Selberg integral \cite{aar,for-war,Selberg}.
For $n=1$ both these multiple integrals reduce to the elliptic beta integral
\eqref{ebeta}. In the simplest $p\to 0$ limit, similar to the mentioned
reduction to the Rahman integral, one reproduces the Gustafson
$C_n$-integrals from \cite{gust2}.

\section{The rarefied elliptic gamma function}

An analogue of the elliptic gamma function
for the simplest lens space was introduced in \cite{ben}. Some functions
involving it were considered in \cite{kels,RW}.
In comparison to the standard elliptic hypergeometric integrals,
they contain some integer parameters and involve
finite summations over discrete variables additional to the standard
integrations. In this work we use the analysis of \cite{ben,kels,RW}
as an inspiration for considering the general structure of elliptic
hypergeometric functions of such type,
which we call the {\em rarefied} elliptic hypergeometric functions.

The lens space elliptic gamma function is determined by a particular product
of two standard elliptic gamma functions with different bases
\begin{eqnarray}\label{reg} &&
\gamma^{(r)}(z,m;p,q):=\Gamma(zp^m;p^r, pq)\Gamma(zq^{r-m};q^r, pq)
\\ && \makebox[4em]{}
=\prod_{j,k=0}^\infty\frac{1-z^{-1}p^{-m}(pq)^{j+1}p^{r(k+1)}}
{1-zp^m(pq)^{j}p^{rk}}
\frac{1-z^{-1}q^m(pq)^{j+1}q^{rk}}{1-zq^{r-m}(pq)^{j}q^{rk}},
\nonumber\end{eqnarray}
which, in addition to the variable $z\in\mathbb{C}^*$,
involves two integers $r\in\mathbb{Z}_{>0}$ and $m\in\mathbb{Z}$.
It has poles at the points
\beq
z_{\rm poles}=p^{-m-j-rk}q^{-j},\; p^{-j}q^{m-r-j-rk},
\label{zp}\eeq
and zeros
\beq
z_{\rm zeros}=p^{j+1+r(k+1)-m}q^{j+1},\; p^{j+1}q^{m+j+1+rk},\quad j,k\in\mathbb{Z}_{\geq 0}.
\label{zz}\eeq
According to \cite{ben},  this function is
associated with the superconformal index of a chiral superfield
on the space-time $S^1 \times L(r,k)$ with $k=-1$, where
$L(r,k)$ is the lens space defined by the identification of
points $(e^{2\pi\textup{i}/r} z_1,e^{2\pi\textup{i}k/r}z_2)\sim (z_1,z_2)$
in the complex representation of the $S^3$-sphere, $|z_1^2|+|z_2|^2=1$.
Erroneously, in the physics literature the same space is denoted
as $L(r,-k)$, which should not be confused with our notation.
To the present time, the superconformal
indices have only been computed for the space $L(r,-1)$. It is
worth of mentioning that the manifolds $L(r,-1)$
(or $L(r,r-1)$) and $L(r,1)$ are homeomorphic and differ by
orientation only. At the moment it is not clear whether the
rarefied elliptic hypergeometric functions
(superconformal indices) described below can distinguish them or not.

The function $\gamma^{(r)}(z,m;p,q)$
looks rather different from $\Gamma(z; p,q)$,
since it involves three bases $p^r, q^r, pq$ and the discrete variable $m$.
Let us show that, in fact, it is nothing but a special product of
standard  elliptic gamma functions with bases $p^r$ and $q^r$.
Consider the double elliptic gamma function $\Gamma(z;p,q,t)$
with a special choice of the third base parameter $t=pq$.
With its help, we can write
\baq \nonumber &&
\gamma^{(r)}(z,m;p,q)
=\frac{\Gamma(q^rz p^m;p^r,q^r,pq)}{\Gamma(z p^m;p^r,q^r,pq)}
\frac{\Gamma(p^r z q^{r-m};p^r,q^r,pq)}{\Gamma(z q^{r-m};p^r,q^r,pq)}
\\  && \makebox[6em]{}
=\frac{\Gamma((pq)^mq^{r-m}z;p^r,q^r,pq)}{\Gamma(q^{r-m}z;p^r,q^r,pq)}
\frac{\Gamma((pq)^{r-m}p^mz;p^r,q^r,pq)}{\Gamma(p^mz;p^r,q^r,pq)}.
\label{regm_egm3}\eaq
Using equation \eqref{2ndordereqn}, from the latter relation we
derive the product of two Pochhammer-type symbols built out of the elliptic
gamma function with the bases $p^r$ and $q^r$.
For $0\leq m\leq r$ we obtain the expression
\baq  &&
\gamma^{(r)}(z,m;p,q)
=\prod_{k=0}^{m-1}\Gamma(q^{r-m}z(pq)^k;p^r,q^r)\prod_{k=0}^{r-m-1}\Gamma(p^mz(pq)^k;p^r,q^r),
\label{regm_r1}\eaq
for $m<0$ we have
\baq &&
\gamma^{(r)}(z,m;p,q)=\frac{\prod_{k=0}^{r-m-1}\Gamma(p^mz(pq)^k;p^r,q^r)}
{\prod_{k=1}^{-m}\Gamma(q^{r-m}z(pq)^{-k};p^r,q^r)},
\nonumber\eaq
and for $m > r$,
\baq  &&
\gamma^{(r)}(z,m;p,q)=\frac{\prod_{k=0}^{m-1}\Gamma(q^{r-m}z(pq)^k;p^r,q^r)}
{\prod_{k=1}^{m-r}\Gamma(p^mz(pq)^{-k};p^r,q^r)}.
\nonumber\eaq
The second order elliptic gamma function is related to superconformal indices
of six dimensional field theories. Therefore it is natural to expect that
there exists some physical meaning of the function \eqref{regm_egm3}
from the point of view of compactification of
six dimensional theories to the lens space \cite{ben,RW}.

From \eqref{regm_r1} it follows that
for $r=1, m=0$ and $r=1, m=1$ we have the standard elliptic gamma function
$$
\gamma^{(1)}(z,0;p,q)=\gamma^{(1)}(z,1;p,q)=\Gamma(z;p,q).
$$
These equalities are related to the following factorized representation
of the elliptic gamma function
\begin{eqnarray}
\Gamma(z;p,q)=
\prod_{j=0}^\infty\frac{1-z^{-1}(pq)^{j+1}}{1-z(pq)^j}
\prod_{j,k=0}^\infty\frac{1-z^{-1}(pq)^{j+1}p^{k+1}}
{1-z(pq)^jp^{k+1}}
\frac{1-z^{-1}(pq)^{j+1}q^{k+1}}{1-z(pq)^{j}q^{k+1}}.
\label{r=1}\end{eqnarray}

For $r=1$ and $m\neq 0$ one can deduce directly from the definition \eqref{reg}
the recurrence relation
\begin{eqnarray}
\gamma^{(1)}(z,m+1;p,q)=\frac{\theta(zp^m;pq)}{\theta(zq^{-m};pq)}  \gamma^{(1)}(z,m;p,q),
\end{eqnarray}
yielding
\begin{eqnarray} \nonumber &&
\gamma^{(1)}(z,m;p,q)=\theta(z;pq|p)_m\theta(qz;pq|q)_{-m}\Gamma(z;p,q)
\\   && \makebox[6em]{}
=\left(-\frac{\sqrt{pq}}{z}\right)^{\frac{m(m-1)}{2}}
\left(\frac{q}{p}\right)^{\frac{m(m-1)(2m-1)}{12}}\Gamma(z;p,q), \quad m\in\mathbb{Z}.
\label{r=1gamma}\end{eqnarray}
As a result, the normalization condition $\Gamma(\sqrt{pq};p,q)=1$
is replaced by a more complicated relation
$$
\gamma^{(1)}(\sqrt{pq},m;p,q)=(-1)^{\frac{m(m-1)}{2}}
\left(\frac{q}{p}\right)^{\frac{m(m-1)(2m-1)}{12}}.
$$

Instead of the exact $(p,q)$-permutation symmetry one now has
\beq
\gamma^{(r)}(z,m;p,q)=\gamma^{(r)}(z,r-m;q,p).
\eeq
The $\gamma^{(r)}$-function has an important quasiperiodicity property
\begin{eqnarray} \nonumber &&
\frac{\gamma^{(r)}(z,m+kr;p,q)}{\gamma^{(r)}(z,m;p,q)}
=\prod_{l=0}^{k-1}\frac{\theta(zp^{m+lr};pq)}{\theta(zq^{-m-lr};pq)}
\\  && \makebox[4em]{}
=\left(-\frac{\sqrt{pq}}{z}\right)^{mk+r\frac{k(k-1)}{2}}
\left(\frac{q}{p}\right)^{k(\frac{1}{2}m^2+mr\frac{k-1}{2}
+r^2\frac{(k-1)(2k-1)}{12}) }, \quad k\in \mathbb{Z}.
\label{r-per_gen}\eaq
Since any integer $m$ can be represented in the form $l+kr$ with $0\leq l\leq r-1$, $k\in\mathbb{Z}$,
formulae \eqref{regm_r1} and \eqref{r-per_gen} provide general representation of the
$\gamma^{(r)}(z,m;p,q)$-function as a product of elliptic gamma functions with the bases $p^r$ and $q^r$
up to some (cumbersome, but elementary) exponential factor.

The inversion relation has the form
\beq
\gamma^{(r)}(z,m;p,q)\gamma^{(r)}(\textstyle{\frac{pq}{z}},r-m;p,q)=1,
\label{inv0}\eeq
which is proved by using the definition \eqref{reg}
and formula \eqref{inv1}.
The most elementary equations for this function have the form
\baq\nonumber &&
\gamma^{(r)}(qz,m+1;p,q)=\theta(zp^{m};p^r)\gamma^{(r)}(z,m;p,q),
\\ &&
\gamma^{(r)}(pz,m-1;p,q)=\theta(zq^{r-m};q^r)\gamma^{(r)}(z,m;p,q).
\label{rr}\eaq

Let us normalize the $\gamma^{(r)}(z,m;p,q)$-function as follows
\beq
\Gamma^{(r)}(z,m;p,q)
:=\left(-\frac{z}{\sqrt{pq}}\right)^{\frac{m(m-1)}{2}}
\left(\frac{p}{q}\right)^{\frac{m(m-1)(2m-1)}{12}}\gamma^{(r)}(z,m;p,q).
\label{regm}\eeq
We call this function {\em the rarefied elliptic gamma function}.
Its poles and zeros lie at the same points as in \eqref{zp} and \eqref{zz}.
For $r=1$, independently of the value of $m\in\mathbb{Z}$,  one has the equality
$$
\Gamma^{(1)}(z,m;p,q)=\Gamma(z;p,q),
$$
which is the source for the normalizing multiplier choice in \eqref{regm}.

The discrete variable quasiperiodicity takes now the form
\beq
\frac{\Gamma^{(r)}(z,m+kr;p,q)}{\Gamma^{(r)}(z,m;p,q)}
=\left[\left(-\frac{z}{\sqrt{pq}}\right)^{2m+rk}
 \left(\frac{p}{q}\right)^{m(m+rk)+\frac{r(2rk^2-1)}{6}}\right]^{\frac{1}{2}k(r-1)}, \quad k\in\mathbb{Z}.
\label{quasi1}\eeq
For this function the permutation of $p$ and $q$ is equivalent
to the change of sign $m\to -m$,
\beq
\Gamma^{(r)}(z,m;p,q)=\Gamma^{(r)}(z,-m;q,p).
\label{sympq}\eeq
The inversion relation also takes a natural compact form
\beq\label{invGamma}
\Gamma^{(r)}(z,m;p,q)\Gamma^{(r)}(\textstyle{\frac{pq}{z}},-m;p,q)=1.
\eeq
As a consequence one has the following relations
$$
\Gamma^{(r)}(\sqrt{pq},m;p,q)\Gamma^{(r)}(\sqrt{pq},-m;p,q)=1
$$
and
\beq
\Gamma^{(r)}(\sqrt{pq},0;p,q)=1,
\label{norm}\eeq
which can be used as a normalization condition.
For computing the residues we shall need the following limiting relation
 \beq\label{rfres} \qquad
\stackreb{\lim}{z\to 1}(1-z) \Gamma^{(r)}(z,0;p,q)
=\stackreb{\lim}{z\to 1}(1-z) \gamma^{(r)}(z,0;p,q)
=\frac{1}{(p^r;p^r)_\infty(q^r;q^r)_\infty},
 \eeq
which is easily established from the representation \eqref{regm_r1}, relation
\eqref{resid}, and the identity $\prod_{k=1}^{r-1} \Gamma((pq)^k;p^r,q^r)=1$.

The elementary recurrence relations take the form
\baq\nonumber && 
\Gamma^{(r)}(qz,m+1;p,q)=(-z)^m p^{\frac{m(m-1)}{2}}\theta(zp^{m};p^r)\Gamma^{(r)}(z,m;p,q),
\\ &&
\Gamma^{(r)}(pz,m-1;p,q)=(-z)^{-m}q^{\frac{m(m+1)}{2}}
\theta(zq^{-m};q^r)\Gamma^{(r)}(z,m;p,q).
\label{Gp}\eaq
The second equality is equivalent to the first one, but in the
explicit computations both forms are equally heavily used,
as well as the following relations
\baq\nonumber &&
\Gamma^{(r)}(q^{-1}z,m-1;p,q)=\left(-\frac{z}{pq}\right)^{-m}
p^{-\frac{m(m+1)}{2}}\frac{\Gamma^{(r)}(z,m;p,q)}{\theta(z^{-1}qp^{1-m};p^r)},
\\ &&
\Gamma^{(r)}(p^{-1}z,m+1;p,q)=\left(-\frac{z}{pq}\right)^m q^{-\frac{m(m-1)}{2}}
\frac{\Gamma^{(r)}(z,m;p,q)}{\theta(z^{-1}pq^{1+m};q^r)}.
\nonumber \eaq

Note that equations  \eqref{Gp}
do not determine uniquely the function $\Gamma^{(r)}(z,m;p,q)$.
The general solution of these equations has the form $\Gamma^{(r)}(z,m;p,q)\varphi_m(z)$,
where the functions $\varphi_m(z)$ satisfy the recurrences
$$
\varphi_{m+1}(qz)=\varphi_m(z), \qquad \varphi_{m-1}(pz)=\varphi_m(z).
$$
Resolution of the first equation yields $\varphi_m(z)=\varphi_0(q^{-m}z)$ for arbitrary function
$\varphi_0(z)$. The second equation yields $\varphi_{0}(pqz)=\varphi_0(z)$,
i.e. $\varphi_0(z)$
is an elliptic function of $z$ with the modular parameter $pq$,
$$
\varphi_0(z)=\prod_{k=1}^{K}\frac{\theta(\alpha_kz;pq)}{\theta(\beta_kz;pq)},
\qquad \prod_{k=1}^{K}\alpha_k= \prod_{k=1}^{K}\beta_k,
$$
for some integer $K=0,2,3,\ldots$ (the order of this elliptic function).
 Here the parameters $\alpha_k, \beta_k\in\mathbb{C}^*$
are arbitrary (up to one constraint) variables forming the divisor set of
$\varphi_0(z)$.
So, the space of solutions of interest has a functional freedom.
However, the quasiperiodicity condition \eqref{quasi1} removes
this freedom. Indeed, as a consequence of  \eqref{quasi1}
one gets the additional constraint $\varphi_0(q^rz)=\varphi_0(z)$.
For incommensurate $p$ and $q$, all indicated restrictions for
$\varphi_0(z)$ can be satisfied only by a constant, $\varphi_0(z)=const$,
which is fixed by the normalization condition \eqref{norm}.

For $0\leq m\leq r$, one can write
\baq \nonumber &&
\Gamma^{(r)}(z,m;p,q)=(-z)^{\frac{m(m-1)}{2}}p^{\frac{m(m-1)(m-2)}{6}} q^{-\frac{m(m^2-1)}{6}}
\\ && \makebox[5em]{} \times
\prod_{k=0}^{m-1}\Gamma(q^{r-m}z(pq)^k;p^r,q^r)\prod_{k=0}^{r-m-1}\Gamma(p^mz(pq)^k;p^r,q^r).
\label{GGrel}\eaq
This relation together with the quasiperiodicity \eqref{quasi1} expresses the
$\Gamma^{(r)}(z,m;p,q)$-function with arbitrary $m$ as a product
of ordinary elliptic gamma functions, since any $m$ can be represented
in the form $\ell+kr$ with $0\leq \ell \leq r-1$, $k\in\mathbb{Z}$.
Therefore it is natural to expect that all interesting integrals
constructed from the $\Gamma^{(r)}(z,m;p,q)$-function can be
related to some standard elliptic hypergeometric integrals.

Finally, we stress that the normalization of the $\gamma^{(r)}$-function
which we have chosen is not unique. It is easy to find
a multiplier $\mu(z,m)$ such that the product
$\tilde \Gamma^{(r)}(z,m;p,q):=\mu(z,m)\gamma^{(r)}(z,m;p,q)$ will be
$r$-periodic, $\tilde \Gamma^{(r)}(z,m+r;p,q)=\tilde \Gamma^{(r)}(z,m;p,q)$.
We have rejected this evident option from the very beginning
because in this case $\mu(z,m)$ will not be a meromorphic function of $z$
and in general one cannot write contour integrals for
products of $\tilde \Gamma^{(r)}$-functions. However, following an early version of
the present work, in \cite{KY} the rarefied elliptic hypergeometric functions
are written using such a periodic gamma function.
As a result, corresponding discrete parameters (as well as the discrete balancing
condition) are automatically defined modulo $r$, which is not so in our case.
The kernels of the corresponding sum-integrals become meromorphic functions of
the integration variables $z_j$ only after imposing the balancing condition.
As a result, in this case both normalizations of the gamma function appear
to be equivalent.

\section{A rarefied elliptic beta integral}

We define the kernel of a prospective rarefied elliptic beta integral
\beq
\Delta_\epsilon^{(r)}(z,m;t_a,n_a|p,q):=
\frac{\prod_{a=1}^6\Gamma^{(r)}(t_az,n_a+ m+\epsilon;p,q)
\Gamma^{(r)}(t_az^{-1},n_a- m;p,q)}
{\Gamma^{(r)}(z^{\pm2},\pm (2m+\epsilon);p,q)},
\label{ker_ebeta}\eeq
where we adopted the compact notation
\beq
\Gamma^{(r)}(tz^{\pm1},n\pm m;p,q):=\Gamma^{(r)}(tz,n+m;p,q)
\Gamma^{(r)}(tz^{-1},n-m;p,q).
\eeq
If the new discrete variable $\epsilon$ is an even integer,
then the transformation
$$
n_a\to n_a- \epsilon/2,\qquad  m \to m-\epsilon/2
$$
removes $\epsilon$ completely. For odd $\epsilon$ using this
transformation one can reduce the value of $\epsilon$ to 1.
Therefore we assume that $\epsilon$ takes the values
$$
\epsilon= 0\; \text{or}\; 1.
$$
One has the permutation symmetry
\beq
\Delta^{(r)}_\epsilon(z,m;t_a,n_a|p,q)=\Delta_{-\epsilon}^{(r)}(z,-m;t_a,-n_a|q,p).
\eeq
From now on, in most places we drop for brevity the bases $p$ and $q$ and the
superscript $r$ from the notation for integrands and rarefied elliptic gamma functions.

It is not difficult to derive the following equations
\beq
\frac{\Delta_\epsilon(pz,m-1;t_a,n_a)}{\Delta_\epsilon(z,m;t_a,n_a)}=h_1(z,m), \qquad
\frac{\Delta_\epsilon(qz,m+1;t_a,n_a)}{\Delta_\epsilon(z,m;t_a,n_a)}=h_2(z,m),
\label{rec12}\eeq
where
\baq  &&
h_1(z,m)=\left(\frac{q^{2m+\epsilon+1}}{pz^2}
\right)^{\sum_{a=1}^6n_a+3\epsilon}pq
\prod_{a=1}^6\frac{\theta(t_azq^{-n_a-m-\epsilon};q^r)}
{\theta(t_a^{-1}pzq^{1+n_a-m};q^r)}
\frac{\theta((pqz)^2q^{-2m-\epsilon};q^r)}{\theta(z^2q^{-2m-\epsilon};q^r)}
\nonumber\eaq
and
\baq   &&
h_2(z,m)=\left(\frac{p^{2m+\epsilon+1}}{qz^2}
\right)^{\sum_{a=1}^6n_a+3\epsilon}pq
\prod_{a=1}^6\frac{\theta(t_azp^{n_a+m+\epsilon};p^r)}
{\theta(t_a^{-1}qzp^{1-n_a+m};p^r)}
\frac{\theta((pqz)^2p^{2m+\epsilon};p^r)}{\theta(z^2p^{2m+\epsilon};p^r)}.
\nonumber\eaq
One can check that
\beq
\frac{h_1(q^rz,m)}{h_1(z,m)}=q^{2(1-r)(\sum_{a=1}^6n_a+3\epsilon)}
\frac{(pq)^2}{\prod_{a=1}^6t_a^2},
\qquad
\frac{h_2(p^rz,m)}{h_2(z,m)}=p^{2(r-1)(\sum_{a=1}^6n_a+3\epsilon)}
\frac{(pq)^2}{\prod_{a=1}^6t_a^2}.
\nonumber\eeq
Therefore, imposing the balancing condition
\beq
\prod_{a=1}^6t_a=pq, \qquad \sum_{a=1}^6n_a+3\epsilon=0,
\label{balance}\eeq
we obtain
\beq
h_1(q^rz,m)=h_1(z,m), \qquad  h_2(p^rz,m)=h_2(z,m).
\eeq
Denoting $q=e^{2\pi \textup{i} \sigma}$, $p=e^{2\pi \textup{i} \tau}$,
we see that $h_1(e^{2\pi \textup{i} u},m)$ becomes an elliptic function of $u$ of
order 10 with the periods 1 and $r\sigma$, while $h_2(e^{2\pi \textup{i} u},m)$
becomes a similar function with the periods
1 and $r\tau$. Note that one could fix the balancing condition for the
continuous parameters as $\prod_{a=1}^6t_a=-pq$, which also leads to
elliptic functions, but, similar to the standard elliptic beta integral
case \cite{spi:umn}, the choice \eqref{balance} is a distinguished one.

One can verify that now $\Delta_\epsilon(t_a,n_a;z,m)$
becomes a periodic function of $m$:
\beq\label{period}
\Delta_\epsilon(z,m+r;t_a,n_a)=\Delta_\epsilon(z,m;t_a,n_a),
\eeq
since all quasiperiodicity factors emerging from the relation \eqref{quasi1}
cancel out.
Using this fact we repeat $r$ times the recurrence relations \eqref{rec12}
and obtain
\begin{eqnarray} \label{eqD1} &&
\Delta_\epsilon(p^rz,m;t_a,n_a)=\prod_{k=0}^{r-1} h_1(p^kz,m-k)\,
 \Delta_\epsilon(z,m;t_a,n_a),
\\ &&
\Delta_\epsilon(q^rz,m;t_a,n_a)=\prod_{k=0}^{r-1} h_2(q^kz,m+k)\,
 \Delta_\epsilon(z,m;t_a,n_a).
\label{eqD2}\end{eqnarray}
Therefore, the function $\Delta_\epsilon(z,m;t_a,n_a)$ is a solution of a
finite-difference equation of the first order with the coefficient
given by a particular elliptic function of order $10r$.

We remind the reader of the definition of elliptic hypergeometric integrals
in the multiplicative notation \cite{spi:theta2} -- these are
the contour integrals $\int_C\Delta(z)dz/z$ with
$\Delta(z)$ satisfying the first order $q$-difference equation
$\Delta(qz)=h(z)\Delta(z)$ with a $p$-periodic (i.e., elliptic) function $h(z)$,
$h(pz)=h(z)$. Therefore, if we consider
a contour integral of our $\Delta_\epsilon$-function, by definition we obtain
a standard elliptic hypergeometric integral with the bases $p$ and $q$
replaced by $p^r$ and $q^r$, respectively.
If we further sum over $m$ we get an elliptic hypergeometric ``sum-integral".
We call such objects rarefied elliptic hypergeometric functions. Indeed,
they are represented by sums of the standard elliptic hypergeometric integrals
whose parameters are fixed in a particular way using the powers
of $p^{1/r}$ and $q^{1/r}$ (in the notation
of formula \eqref{ebeta}), which justifies the term ``rarefied".
Using the representation \eqref{GGrel} one can rewrite the function
$\Delta_\epsilon(z,m;t_a,n_a)$ as a ratio of standard elliptic gamma
functions with bases $p^r$ and $q^r$, however, the resulting expressions
are cumbersome and we do not consider them here.

A rarefied analogue of the elliptic beta integral \eqref{ebeta}
has the following evaluation.

\begin{theorem}
Let $t_1, \dots ,t_6,p,q \in {\mathbb{C}}^*$ and $n_1,\ldots, n_6\in\mathbb{Z}$
are such that $|t_a|,|p|,|q| <1$ and the following balancing condition holds true
$$
\prod_{a=1}^6 t_a=pq, \qquad \sum_{a=1}^6 n_a+3\epsilon=0,\quad
\epsilon=0,1.
$$
Then
\begin{equation} \label{rfint}
\kappa^{(r)}\sum_{m=0}^{r-1}
\int_{\mathbb{T}}\Delta_\epsilon^{(r)}(z,m;t_a,n_a)
\frac{dz}{z}
= \prod_{1 \leq a < b \leq 6} \Gamma^{(r)}(t_a t_b,n_a+n_b+\epsilon;p,q),
\end{equation}
where $\kappa^{(r)}=(p^r;p^r)_\infty (q^r;q^r)_\infty/4\pi \textup{i}$ and $\mathbb T$ is the positively oriented unit circle.
\end{theorem}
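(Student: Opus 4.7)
The plan is to generalize the elementary proof of the elliptic beta integral from \cite{spi:short} to the rarefied setting, using the theta-function addition law \eqref{addition} to derive contiguous relations and normalising via a residue pinch. Denote the left-hand side of \eqref{rfint} by $I(\mathbf{t},\mathbf{n})$ and the right-hand side by $R(\mathbf{t},\mathbf{n})$. Both are manifestly invariant under simultaneous permutations of the six pairs $(t_a,n_a)$, so it suffices to track the effect of shifts of $(t_5,n_5)$ and $(t_6,n_6)$ compensated by $(t_4,n_4)$ (as required by the balancing condition).

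The core step is to apply \eqref{addition} with arguments built from $t_4,t_5,t_6$ and the integration variable $z$ to obtain an identity of the form $A\,\Delta_\epsilon(z,m;\mathbf{t},\mathbf{n}) = B\,\Delta_\epsilon(z,m;\mathbf{t}',\mathbf{n}') + C\,\Delta_\epsilon(z,m;\mathbf{t}'',\mathbf{n}'')$, where the shifted parameters differ from $(\mathbf{t},\mathbf{n})$ by a $q$-scaling of one $t_a$ combined with a $\pm 1$ shift of the corresponding $n_a$, accounted for via the recurrence relations \eqref{Gp}, while the coefficients $A,B,C$ are $z$- and $m$-independent theta-function ratios. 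Summing over $m\in\{0,\ldots,r-1\}$ and integrating over $\mathbb{T}$ then produces a three-term contiguous relation for $I$. A parallel direct manipulation using \eqref{Gp} and \eqref{addition} shows that $R$ satisfies exactly the same relation; hence $F:=I/R$ is invariant under the generated family of shifts.

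Combining this shift-invariance with the permutation symmetry and with the quasi-periodicity \eqref{quasi1}, one shows that $F$ descends to a bounded single-valued function on a complex torus in the parameter space, and is therefore a constant. To fix that constant I would specialise $t_5 t_6=pq$ with $n_5+n_6+\epsilon=0$, in which limit a single pair of poles of the integrand pinches the contour; extracting the residue via \eqref{rfres} (together with the $m=0$ contribution being the dominant one, since the other summands have $\Delta_\epsilon$ regular on $\mathbb{T}$) reduces \eqref{rfint} to the same statement in four parameters. Iterating, one arrives at the normalisation \eqref{norm} and concludes $F\equiv 1$.

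The principal obstacle will be coordinating the discrete sum over $m$ with the contour integration. Trading a shift $n_a\to n_a\pm 1$ for a $p$- or $q$-scaling of $t_a$ via \eqref{Gp} introduces $m$-dependent monomial prefactors, and these must recombine to yield an $m$-independent three-term relation once the sum is performed. The balancing condition $\sum_a n_a+3\epsilon=0$, which is precisely what produces the $m$-periodicity \eqref{period} of $\Delta_\epsilon$, is what makes this cancellation work, and it must be invoked at each occurrence of the recurrence. A secondary subtlety is the tracking of the two pole sequences \eqref{zp}, whose positions depend on $m$: the contour $\mathbb{T}$ must be deformed consistently across all terms of the sum when pinching parameters for the residue normalisation, and the discrete shifts in $m$ associated with the recurrences \eqref{Gp} must be aligned with the pinching so that a single well-defined residue results.
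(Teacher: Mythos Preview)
Your outline diverges from the paper's method and has two concrete gaps.

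First, the paper (for general $C_n$, with $n=1$ as the special case) does \emph{not} use a three-term contiguous relation in the parameters. Instead it proves an identity of the form
\[
\rho_\epsilon(z,m;pt_1,n_1-1,\ldots)-\rho_\epsilon(z,m;t_a,n_a)
= g_\epsilon(p^{-1}z,m+1;t_a,n_a)-g_\epsilon(z,m;t_a,n_a),
\]
i.e.\ the \emph{difference of the normalized integrand under a single $p$-shift of $t_1$ equals a total $p$-difference in $z$}. After integrating over $\mathbb T$ and summing over $m$, the right-hand side vanishes (once one checks $g_\epsilon$ has no poles in the annulus $1\le|z|\le|p|^{-1}$), yielding the \emph{first-order} recurrence $I_\epsilon(pt_1,n_1-1,\ldots)=I_\epsilon(t_a,n_a)$. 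Iterating $r$ times and using the $r$-periodicity in $n_1$ gives $I_\epsilon(p^rt_1,\ldots)=I_\epsilon(t_a,n_a)$, and similarly in $q$; incommensurability of $p^r,q^r$ then forces constancy. Your three-term scheme is structurally different: from $AI=BI'+CI''$ and $AR=BR'+CR''$ you cannot conclude $I/R=I'/R'=I''/R''$ without extra input, so the step ``hence $F:=I/R$ is invariant under the generated family of shifts'' is unjustified as written. (The three-term relations of the type \eqref{cont-1} are used in the paper for the \emph{elliptic hypergeometric equation}, not for the evaluation.)

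Second, your normalisation step is mis-identified. With $t_5t_6=pq$ and $n_5+n_6+\epsilon=0$, the inversion relation \eqref{invGamma} gives
\[
\Gamma^{(r)}(t_5z,n_5+m+\epsilon)\,\Gamma^{(r)}(t_6z^{-1},n_6-m)=1,
\]
and likewise for the $z\leftrightarrow z^{-1}$ pair. So the $t_5,t_6$-dependent factors \emph{cancel identically}; there is no pole pinching, and you are left with a four-parameter sum-integral with balancing $\prod_{a=1}^4t_a=1$, which is not of the same shape and cannot be ``iterated'' as you suggest. The paper's normalisation instead sends $t_at_b\to 1$ (with $n_a+n_b=0$), which genuinely pinches $\mathbb T$ between the pole at $z=t_a$ (from $\Gamma(t_az^{-1},\cdot)$) and the pole at $z=t_b^{-1}$ (from $\Gamma(t_bz,\cdot)$) in the $m=0$ term; the residue is then computed via \eqref{rfres}.
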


For $\epsilon=0$ one gets the relation proven by Kels in \cite{kels}.
The constraints $|t_a|<1$ can be relaxed by replacing for each fixed $m$
the integration contour $\T$ by a contour $C_m$ separating geometric
progressions of the $\Delta_\epsilon$-function poles
converging to zero from their reciprocals diverging to infinity.
The conditions of existence
of such contours are complicated, they impose certain constraints on
the parameters and require thorough considerations.
In this case, evidently, one cannot permute the summation over $m$ and
integration over $z$ in \eqref{rfint}. Note also that, because of the
periodicity \eqref{period}, the sum over $m=0,1,\ldots, r-1$ can be
replaced by sums over any $r$ consecutive values of the integer $m$.
Similarly, from the evident relation
$\Delta_\epsilon(z,-m;t_a,n_a)=\Delta_\epsilon(z^{-1},m-\epsilon;t_a,n_a)$
and the $r$-periodicity in $m$ one has
\begin{eqnarray*} &&
c_{r-m}:=\int_{\mathbb{T}}\Delta_\epsilon(z,r-m;t_a,n_a)\frac{dz}{z}=
\int_{\mathbb{T}}\Delta_\epsilon(z^{-1},m-\epsilon;t_a,n_a)\frac{dz}{z}
\\ && \makebox[2,5em]{}
= \int_{\mathbb{T}}\Delta_\epsilon(z,m-\epsilon;t_a,n_a)\frac{dz}{z}
=c_{m-\epsilon}.
\end{eqnarray*}
Therefore the sum over $m$ in \eqref{rfint} can be reduced for $\epsilon=0$ to
\beq
\sum_{m=0}^{r-1}c_m =
\left\{
\begin{array}{cl}
c_0+c_{r/2}+2\sum_{m=1}^{r/2-1}c_m & \text{for even}\, r, \\
c_0+2\sum_{m=1}^{(r-1)/2}c_m  & \text{for odd}\, r,
\end{array}
\right.
\label{e=0red}\eeq
and for $\epsilon=1$ to
\beq
\sum_{m=0}^{r-1}c_m =
\left\{
\begin{array}{cl}
2\sum_{m=0}^{r/2-1}c_m & \text{for even}\, r, \\
c_{(r-1)/2}+2\sum_{m=0}^{(r-3)/2}c_m  & \text{for odd}\, r.
\end{array}
\right.
\label{e=1red}\eeq
We shall use such a representation in Sect. 6 for the consideration
of a particular $p\to 0$ limit in the equality \eqref{rfint}.

According to the discussion given above, on the left-hand
side of identity \eqref{rfint} we have a sum of $r$ ordinary
elliptic hypergeometric integrals.
The proof of relation \eqref{rfint} different in certain aspects
from the one used (for $\epsilon=0$) in \cite{kels} will be given in the next section
as a subcase of a substantially more general situation.

\section{A $C_n$ rarefied elliptic beta integral of type I}

We define the kernel of the type I multiple rarefied elliptic beta integral
for the root system $C_n$ as
\baq \nonumber &&
\Delta_{I,\epsilon}({z}_j,{m}_j;{t}_a,{n}_a):=\prod_{1\leq j<k\leq n}
\frac{1}{\Gamma(z_j^{\pm 1}z_k^{\pm 1},\pm (m_j+\epsilon/2)\pm (m_k+\epsilon/2)))}
\\ && \makebox[2em]{} \times
\prod_{j=1}^n\frac{\prod_{a=1}^{2n+4}
\Gamma(t_a z_j,n_a+ m_j+\epsilon)
\Gamma(t_a z_j^{- 1},n_a-m_j)}
{\Gamma(z_j^{\pm 2},\pm (2m_j+\epsilon))},
\label{ker_ebetaCI}\eaq
where $ t_a,z_j\in \mathbb{C}^*,\, n_a, m_j\in\mathbb{Z},\, \epsilon=0,1,$
and impose the balancing condition
\beq
\prod_{a=1}^{2n+4}t_a=pq,\qquad \sum_{a=1}^{2n+4}n_a+(n+2)\epsilon=0.
\label{balI}\eeq
Association with the root system $C_n$ comes from the fact that the
denominator of the ratio of the products of rarefied elliptic gamma
functions in \eqref{ker_ebetaCI} can be formally written as
$\prod_{\alpha \in R(C_n)}\Gamma(e^{u\alpha},(m+\epsilon/2)\alpha;p,q)$,
where $\alpha\in\{\pm e_i\pm e_j\, (i<j), \pm 2e_i\}_{i,j=1,\ldots,n}$
are the roots of the root system $R(C_n)$ with $e_i$ being the standard
euclidean basis vectors of $\R^n$ and $z_i:=e^{ue_i}$, $m_i:=me_i$
for some formal variables $u\in\mathbb{C}$ and $m\in \mathbb{Z}$.

\begin{theorem}
Let $4n+8$ continuous and discrete parameters $t_a\in \mathbb{C}^*$,
$n_a\in \mathbb{Z}$,  $a=1,\ldots, 2n+4$, the variable $\epsilon=0,1$,
and bases $p,q\in \mathbb{C}$ satisfy the restrictions
$|p|, |q|, |t_a|<1$ and the balancing condition \eqref{balI}. Denote
$$
\kappa_{n}^{(r)}=\frac{(p^r;p^r)_\infty^n(q^r;q^r)_\infty^n}{(4\pi \textup{i})^n n!}.
$$
Then
\baq  && \makebox[-2em]{}
\kappa_{n}^{(r)} \sum_{m_1,\ldots,m_n=0}^{r-1}
\int_{\mathbb{T}^n}\Delta_{I,\epsilon}(z_j,m_j;t_a,n_a)
\prod_{j=1}^n\frac{dz_j}{z_j}
= \prod_{1 \leq a < b \leq 2 n+4} \Gamma(t_a t_b,n_a+n_b+\epsilon;p,q),
\label{rfintCI}\eaq
where $\mathbb T$ is the unit circle of positive orientation.
\end{theorem}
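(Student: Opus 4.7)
The plan is to follow the elementary induction method of \cite{spi:short}, adapted to the rarefied setting, with the overall logic taken from \cite{vDS1,vDS2}. Denote by $L_n$ and $R_n$ the left- and right-hand sides of \eqref{rfintCI} with the balancing condition \eqref{balI} enforced. I would prove $L_n = R_n$ by induction on $n$. The base case $n = 0$ is the trivial equality of empty products; the argument will also cover $n = 1$, thereby yielding the univariate identity \eqref{rfint} as announced after Theorem 2. Assume the identity for $C_{n-1}$ and $2n+2$ parameters; the goal is to deduce it for $C_n$ and $2n+4$ parameters.

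The central idea is to view both $L_n$ and $R_n$ as meromorphic functions of a single distinguished parameter pair, say $(t_{2n+3}, n_{2n+3})$, with $(t_{2n+4}, n_{2n+4})$ eliminated via \eqref{balI}. Using the elementary difference relations \eqref{Gp} applied to each factor of the kernel in which $t_{2n+3}$ appears, together with the $r$-periodicity \eqref{period} of the integrand in $m_j$, I would verify that the ratio $F_n := L_n/R_n$ is invariant under the shifts $t_{2n+3} \mapsto p^r t_{2n+3}$ and $t_{2n+3} \mapsto q^r t_{2n+3}$ (with compensating shifts of $n_{2n+3}$ absorbed by the discrete balancing and the $r$-periodicity). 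Hence $F_n$ descends to an elliptic function of $\log t_{2n+3}$ on the torus with periods $r \log p$, $r \log q$. The induction hypothesis enters through a residue computation: as $t_{2n+3}$ approaches one of the distinguished values at which a pole of $\Gamma^{(r)}(t_{2n+3} z_j^{\pm 1}, \ldots)$ collides with a pole of $\Gamma^{(r)}(t_b z_j^{\mp 1}, \ldots)$ for some $b \leq 2n+2$, the contour $\mathbb{T}^n$ is pinched. Taking the corresponding residue using \eqref{rfres} and the inversion relation \eqref{invGamma} reduces $L_n$ to a $C_{n-1}$ rarefied sum-integral with $2n+2$ effective parameters, while a parallel elementary computation with \eqref{Gp} reduces the same residue of $R_n$ to the corresponding $C_{n-1}$ right-hand side. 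By the inductive hypothesis the two residues match, so $F_n - 1$ is an elliptic function with no residues on a fundamental domain, hence a constant; evaluation at a convenient degeneration (e.g.\ $t_a t_b \to 1$ with $n_a + n_b + \epsilon = 0$ so that \eqref{norm} applies) then forces $F_n \equiv 1$.

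The main obstacle is the bookkeeping of the discrete variables. Unlike the standard case of \cite{spi:short}, each continuous shift $t_a \mapsto p^r t_a$ or $t_a \mapsto q^r t_a$ must be coupled with a discrete shift $n_a \mapsto n_a \mp r$ so as to preserve the discrete balancing $\sum_a n_a + (n+2)\epsilon = 0$, and the extra prefactors $(-z)^{\pm m} p^{m(m\pm 1)/2}$, $q^{m(m\pm 1)/2}$ in \eqref{Gp} produce nontrivial combinatorial factors that must cancel precisely between $L_n$ and $R_n$ in the residue identification step. This cancellation ultimately rests on the interplay of the discrete balancing condition with the quasi-periodicity \eqref{quasi1} and the $(p,q)$-permutation symmetry \eqref{sympq}; verifying it through all relevant residues is where the delicate technical work of the proof lies. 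Once these combinatorial identities are established, the ellipticity-plus-residue-matching argument of \cite{spi:short} transfers verbatim and delivers the theorem.
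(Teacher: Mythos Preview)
Your proposal misidentifies the method of \cite{spi:short} and has a genuine gap at its central step. The paper's proof (which \emph{is} the \cite{spi:short} method) does not proceed by induction on $n$; it works directly for arbitrary $n$. The heart of the argument is the explicit telescoping difference equation \eqref{eqnCI},
\[
\rho_\epsilon(z_j,m_j;pt_1,n_1{-}1,\ldots)-\rho_\epsilon(z_j,m_j;t_a,n_a)
=\sum_{k=1}^n\bigl(g_{k,\epsilon}(\ldots,p^{-1}z_k,m_k{+}1,\ldots)-g_{k,\epsilon}(z_j,m_j;\ldots)\bigr),
\]
with explicit theta-function coefficients $g_{k,\epsilon}$. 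After dividing by $\rho_\epsilon$ and shifting $z_j\to z_jq^{m_j+\epsilon/2}$, $t_a\to t_aq^{n_a+\epsilon/2}$, this reduces to precisely the $q^r$-elliptic identity already proved in \cite{spi:short}. Integrating over $\mathbb{T}^n$, summing over $m_j$, and showing by a detailed pole analysis that the contour $p^{-1}\mathbb{T}$ can be deformed back to $\mathbb{T}$ yields $I_\epsilon(pt_1,n_1{-}1,\ldots)=I_\epsilon(t_a,n_a)$; iterating $r$ times and invoking the $r$-periodicity of $\rho_\epsilon$ in $n_1$ gives $p^r$-invariance of $I_\epsilon$ in $t_1$, and likewise for $q^r$. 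Incommensurability of $p,q$ then forces $I_\epsilon$ to be constant, and the constant is fixed by the single residue limit $t_at_{a+n+2}\to1$, $a=1,\ldots,n$, with no appeal to a lower-rank identity.

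Your claimed direct verification that $F_n=L_n/R_n$ is invariant under $t_{2n+3}\mapsto p^r t_{2n+3}$ from \eqref{Gp} and \eqref{period} alone does not go through: applying \eqref{Gp} to the kernel factors produces a multiplicative change that \emph{depends on the integration variables} $z_j$ (a product of theta functions in $t_{2n+3}z_j^{\pm1}$ and $t_{2n+4}z_j^{\pm1}$), so the integral $L_n$ does not transform by the same scalar as $R_n$. Converting this into an invariance statement for the integral is exactly what the telescoping identity \eqref{eqnCI} accomplishes, and it is the step you are missing. Separately, even granting ellipticity, ``$F_n-1$ has no residues on a fundamental domain'' does not imply constancy (think of $\wp$); you would need $F_n-1$ to have no \emph{poles}, which requires a complete divisor analysis rather than residue matching at the obvious pinching loci alone. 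The induction-via-pinching outline you sketch is closer in spirit to \cite{rai:trans} (or the conditional argument of \cite{vDS2}) than to \cite{spi:short}.
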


\begin{proof}
For proving this identity we adapt to the present situation the proof of the standard
type I $C_n$-integral evaluation formula given in \cite{spi:short}. As we will see,
several key steps will be identical. First, let us remove parameters $t_{2n+4}$
and $n_{2n+4}$ using the balancing constraint. For that we denote
$$
A:=\prod_{a=1}^{2n+3}t_a=\frac{pq}{t_{2n+4}}, \qquad
N:= \sum_{a=1}^{2n+3}n_a+(n+2)\epsilon= -n_{2n+4},
$$
and apply the inversion relation for the rarefied elliptic gamma function
involving parameter $t_{2n+4}$.
Now we divide the left-hand side of equality \eqref{rfintCI} by
$\kappa_{n}^{(r)}$ times the expression on the
right-hand side and rewrite it as
\beq\makebox[-2em]{}
I_{\epsilon}(t_1,\ldots, t_{2n+3},n_1,\ldots,n_{2n+3}):= \sum_{{m}=0}^{r-1}\int_{\T}
\rho_{\epsilon}(z_j,m_j;t_a,n_a)
\prod_{j=1}^n\frac{dz_j}{z_j}
=\frac{1}{\kappa_{n}^{(r)}},
\label{intI}\eeq
where
\baq \nonumber &&
\rho_{\epsilon}(z_j,m_j;t_a,n_a):=
\prod_{1\leq j<k\leq n}\frac{1}
{\Gamma(z_j^{\pm 1}z_k^{\pm 1},\pm (m_j+\epsilon/2)\pm (m_k+\epsilon/2))}
\\ && \makebox[2em]{} \times
\prod_{j=1}^n \frac{\prod_{a=1}^{2n+3}
\Gamma(t_a z_j,n_a+ m_j+\epsilon)
\Gamma(t_a z_j^{-1},n_a-m_j)}
{\Gamma(z_j^{\pm 2},\pm (2m_j+\epsilon))
\Gamma(Az_j^{-1},N- m_j-\epsilon)\Gamma(Az_j,N+ m_j)}
\nonumber \\ && \makebox[4em]{} \times
\frac{\prod_{a=1}^{2n+3}\Gamma(At_a^{-1},N-n_a-\epsilon)}
{\prod_{1 \leq a < b \leq 2n+3} \Gamma(t_a t_b,n_a+n_b+\epsilon)}.
\label{rho}\eaq
This $\rho_{\epsilon}$-function is $r$-periodic in all discrete variables
$m_j$, $j=1,\ldots,n,$  and $n_a$, $a=1,\ldots, 2n+3$,
\baq &&
\rho_{\epsilon}(z_j,\ldots, m_k+r, \ldots)=\rho_{\epsilon}(z_j,\ldots, n_b+r, \ldots)
=\rho_{\epsilon}(z_j,m_j;t_a,n_a),
\label{periodicity}\eaq
which is a very important property following from a lengthy cancellation
of the complicated quasiperiodicity multipliers generated by
the rarefied elliptic gamma functions.

Let us investigate the divisor of \eqref{rho} considered as a function
of $z_\ell$. Due to the property
$$
\Gamma^{(r)}(z,m)\Gamma^{(r)}(z^{-1},-m)=\frac{(pq)^{\frac{m(m+1)}{2}}}
{\theta(zq^{-m};q^r)\theta(z^{-1}p^{-m};p^r)},
$$
it does not contain poles whose positions do not depend on $t_a$ (at $z_\ell=0$
one has an essential singularity). The $t_a$-independent zeros do not play any
role in the following considerations and we skip them.
As to the $t_a$-dependent poles and zeros, generically, the function
$\rho_{\epsilon}({z}_\ell,\ldots)$ has sequences of poles converging
to $z_\ell=0$ for any $\ell$ by the points of the sets
$$
P_{\rm in}^A= \{t_aq^kp^{n_a-m_\ell+k+rj}\},\quad
 P_{\rm in}^B= \{t_aq^{r-n_a+m_\ell+k+rj}p^k\}
$$
with $a=1,\ldots, 2n+4$ and $j,k \in\mathbb{Z}_{\geq 0}$, and going to infinity
along the sets
$$
P_{\rm out}^A= \{t_a^{-1}q^{-k}p^{-n_a-m_\ell-\epsilon-k-rj}\},\quad
P_{\rm out}^B= \{t_a^{-1}q^{n_a+m_\ell+\epsilon-k-r(j+1)}p^{-k}\},
$$
which are not identically $z\to 1/z$ reciprocal to $P_{\rm in}$.
Zeros of this function converge to $z_\ell=0$ for any $\ell$
by the point sets
$$
Z_{\rm in}^A= \{t_a^{-1}q^{k+1}p^{-n_a-m_\ell-\epsilon+k+1+r(j+1)}\},\quad
Z_{\rm in}^B= \{t_a^{-1}q^{n_a+m_\ell+\epsilon+k+1+rj}p^{k+1}\}
$$
with $a=1,\ldots, 2n+4$ and $j,k \in\mathbb{Z}_{\geq 0}$, and go to infinity
along the point sets
$$
Z_{\rm out}^A= \{t_aq^{-k-1}p^{n_a-m_\ell-k-1-r(j+1)}\},\quad
Z_{\rm out}^B= \{t_aq^{-n_a+m_\ell-k-1-rj}p^{-k-1}\},
$$
which are also not identically $z\to 1/z$ reciprocal to $Z_{\rm in}$.

As one can see, the structure of poles and zeros is rather complicated and it may
happen that in the sets indicated above positions of some poles and zeros
coincide and, actually, both are absent. First, we assume that the parameters $t_a$ and bases $p,q$
are multiplicatively incommensurate, i.e. $t_a^nt_b^mp^kq^l\neq 1$
for $n,m,k,l\in \mathbb{Z}$, which guarantees that all poles and zeros are simple.
Then, equating positions of poles
and zeros (with $j,k$ replaced by $j',k'$),
we find that nontrivial cancellations may exist only if $j=j'$ and either
\begin{equation}\label{1}
rj+k+k'=-n_a+m_\ell -1,
\end{equation}
if there are intersecting points in $P_{\rm in}^A$ and $Z_{\rm out}^B$, or
\begin{equation}\label{2}
rj+k+k'=n_a-m_\ell -r-1,
\end{equation}
if $P_{\rm in}^B$ and $Z_{\rm out}^A$ overlap, or
\begin{equation}\label{3}
rj+k+k'=-n_a-m_\ell-\epsilon -1,
\end{equation}
if $P_{\rm out}^A$ and $Z_{\rm in}^B$ overlap, or
\begin{equation}\label{4}
rj+k+k'=n_a+m_\ell +\epsilon-r-1,
\end{equation}
if $P_{\rm out}^B$ and of $Z_{\rm in}^A$ overlap.
Let us denote as $p^{\rm max}_{\rm in}$ the maximal possible
absolute value of the pole positions
in some indicated subset of $P_{\rm in}$ and as $p^{\rm min}_{\rm out}$ the minimal
possible absolute value of the pole positions in some indicated subset of $P_{\rm out}$.

Recall that $j,k,k'\geq 0$, but $n_a$ and $m_\ell $ can take arbitrary integer values.
The periodicity \eqref{periodicity} means that the poles of the $\rho_{\epsilon}$-function
form a periodic lattice in
$n_a$ and $m_\ell$ and the above equations for $j, k, k'$ always have solutions for
sufficiently large $|n_a|$ and $|m|$, in which case a part of the poles is cancelled
by zeros. Therefore, without loss of generality, we can
restrict the values of $m$ and $n_a$ to
$$
0\leq m\leq r-1, \qquad -r< n_a< r
$$
(this can be done simply by the shifts $n_a\to n_a\pm r$
and $n_{2n+4}\to n_{2n+4}\mp r,$ as soon as one gets $|\sum_{k=1}^a(n_k+\epsilon/2)|\geq r$
for $a=1,\ldots, 2n+3$.
As a result, we have $-2r+1<n_a-m_\ell<r$ for all $a$, which means that
equation \eqref{2} has no solutions and
$p^{\rm max}_{{\rm in}, B}=\max |qt_a|$. Here and until formula \eqref{eqnCI} below we
assume that $\max$ and $\min$ values are taken in the parameter
sets with the index $a=1,\ldots, 2n+4$.

Suppose now that $n_a-m_\ell\geq 0$. Then equation \eqref{1} has no solutions
and $p^{\rm max}_{{\rm in}, A}=\max |t_a|$, which
is reached only for $m_\ell=n_a$ with the corresponding value of $a$.
Let now $m_\ell\geq n_a+1$. Then equation \eqref{1} may have nontrivial solutions.
For $j=0$ one has $k,k'=0,1,\ldots,m_\ell-n_a-1$, so that
$p^{\rm max}_{{\rm in}, A, j=0}=\max |qt_a|$. For $j=1$ and $0< m_\ell-n_a< r+1$
there are no solutions and the maximal absolute value of the corresponding
pole positions is $\max |t_a|$. For $j=1$ and $m_\ell-n_a\geq r+1$
(which can be satisfied only for $r>2$) the solution is
$k,k'=0,1,\ldots,m_\ell-n_a-1-r$, and the top possible pole position
has the absolute value $\max |qt_a|$. So, $p^{\rm max}_{{\rm in}, A, j=1}=\max |t_a|$.
The poles in $P_{\rm in}^A$ with $j>1$ have $p^{\rm max}_{{\rm in}, A, j>1}=\max |p^{2}t_a|$.
So, for $|t_a|<1$ all the poles from $P_{\rm in}$ lie inside $\T$.

Similar situation takes place for the poles $P_{\rm out}$ and zeros $Z_{\rm in}$. Indeed,
for $n_a+m_\ell+\epsilon \geq 0$ there are no solutions of equation \eqref{3} with
$p^{\rm min}_{{\rm out}, A}=\min |t_a^{-1}|$, which is reached
for $m_\ell=-n_a-\epsilon$  with the corresponding value of $a$.
For $m_\ell < -n_a-\epsilon$  the solution of \eqref{3} is $j=0$
and $k,k'=0,\ldots, -n_a-m_\ell-\epsilon-1$ with $p^{\rm min}_{{\rm out}, A, j=0}= \min |t_a^{-1}q^{-1}|$.
The poles with $j>0$ have $p^{\rm min}_{{\rm out}, A, j>0}= \min |t_a^{-1}p^{-1}|$
(for $\epsilon=0$).

Consider now $P_{\rm out}^B$. For $n_a+m_\ell+\epsilon\leq r$ equation \eqref{4}
has no solutions and $p^{\rm min}_{{\rm out}, B}=\min |t_a^{-1}|$,
which is reached for $m_\ell=r-n_a-\epsilon$ with the corresponding value of $a$.
For $n_a+m_\ell+\epsilon > r$  (which can be satisfied only for $r>2$) the solutions
of \eqref{4}  have the form $j=0$ and $k,k'=0,\ldots, n_a+m_\ell+\epsilon -r-1$
with $p^{\rm min}_{{\rm out}, B,j=0}=\min |t_a^{-1}p^{-1}|$. Finally, for $j>0$
equation \eqref{4} has no solutions and
one has $p^{\rm min}_{{\rm out}, B,j>0}=\min |t_a^{-1}q^{-1}|$ (for $\epsilon =1$). So, all
the poles from  $P_{\rm out}$ lie outside $\T$ for $|t_a|<1$.

To conclude, if we impose the constraint $\max |t_a|<1$, then
all poles from $P_{\rm in}$ and $P_{\rm out}$ are pushed inside and outside
of $\T$, respectively. It is exactly this property
(which is established after a rather neat analysis of
the structure of $\rho_{\epsilon}$-function divisor points)
that determines the choice
of $\T$ as the integration contour in formula \eqref{rfintCI}.

Now we prove the following finite-difference equation:
\baq \label{eqnCI} &&
\rho_{\epsilon}(z_j,m_j;pt_1,t_2,\ldots,n_1-1,n_2,\ldots)-\rho_{\epsilon}(z_j,m_j;t_a,n_a)
\\ && \makebox[4em]{}
=\sum_{k=1}^n\left(g_{k, \epsilon}(\ldots,p^{-1}z_k,\ldots, m_k+1,\ldots;t_a,n_a)
-g_{k, \epsilon}(z_j,m_j;t_a,n_a)\right),
\nonumber\eaq
where
\baq \nonumber &&
g_{k, \epsilon}(z_j,m_j;t_a,n_a)=\rho_{\epsilon}(z_j,m_j;t_a,n_a)
\prod_{\substack{\ell=1 \\ \ell\ne k}}^n
\frac{\theta(t_1z_\ell q^{-n_1-m_\ell-\epsilon},
t_1z_\ell^{-1}q^{-n_1+m_\ell};q^r)}
{\theta(z_kz_\ell q^{-m_k- m_\ell-\epsilon},z_kz_\ell^{- 1}q^{-m_k+ m_\ell};q^r)}
\\ && \makebox[2em]{}  \times
\frac{\prod_{a=1}^{2n+3}\theta(t_az_kq^{-n_a-m_k-\epsilon};q^r)}
     {\prod_{a=2}^{2n+3}\theta(t_1t_aq^{-n_1-n_a-\epsilon};q^r)}
\frac{\theta(t_1Aq^{-n_1-N};q^r)}{\theta(z_k^2q^{-2m_k-\epsilon},Az_kq^{-N-m_k};q^r)}
\frac{t_1q^{m_k}}{z_kq^{n_1}}.
\label{gCI}\eaq

Dividing this equation by $\rho_{\epsilon}(z_j,m_j;t_a,n_a)$ we come to the following identity
\begin{eqnarray}\nonumber  &&
\prod_{j=1}^n\frac{\theta(t_1z_jq^{-n_1- m_j-\epsilon},t_1z_j^{- 1}q^{-n_1+ m_j};q^r)}
{\theta(Az_jq^{-N-m_j},Az_j^{-1}q^{-N+ m_j+\epsilon};q^r)}
\prod_{a=2}^{2n+3}\frac{\theta(At_a^{-1}q^{-N+n_a+\epsilon};q^r)}
                       {\theta(t_1t_aq^{-n_1-n_a-\epsilon};q^r)} - 1
\\ \nonumber && \makebox[2em]{}
=\frac{t_1q^{-n_1}\theta(t_1Aq^{-n_1-N};q^r)}
{\prod_{a=2}^{2n+3}\theta(t_1t_aq^{-n_1-n_a-\epsilon};q^r)}
\sum_{k=1}^n\frac{q^{m_k}}{z_k\theta(z_k^2q^{-2m_k-\epsilon};q^r)}
\\ \nonumber && \makebox[2em]{} \times
\prod_{\substack{j=1 \\ j\ne k}}^n
\frac{\theta(t_1z_jq^{-n_1- m_j-\epsilon},t_1z_j^{-1}q^{-n_1+ m_j};q^r)}
{\theta(z_kz_jq^{-m_k- m_j-\epsilon},z_kz_j^{-1}q^{-m_k+m_j};q^r)}
\\  && \makebox[2em]{}
\times
\left(\frac{z_k^{2n+2}\prod_{a=1}^{2n+3}\theta(t_az_k^{-1}q^{-n_a+m_k};q^r)}
{q^{(n+1)(2m_k+\epsilon)}\theta(Az_k^{-1}q^{-N+m_k+\epsilon};q^r)} -
\frac{\prod_{a=1}^{2n+3}\theta(t_az_kq^{-n_a-m_k-\epsilon};q^r)}
{\theta(Az_kq^{-N-m_k};q^r)}\right).
\label{eqn-CI}\end{eqnarray}
The shifts $z_j\to z_jq^{m_j+\epsilon/2}$ and $t_a\to t_a q^{n_a+\epsilon/2}$ remove completely
the discrete variables $m_j$, $n_a$ and $\epsilon$ from \eqref{eqn-CI} and we obtain precisely
the elliptic functions identity established in \cite{spi:short} in
the proof of the type I $C_n$-integral (with $p$ and $q$
replaced by $p^r$ and $q^r$).

We now integrate equation \eqref{eqnCI} over the multi-contour $\T^n$
and sum over all $m_j$ from $0$ to $r-1$. It can be checked that
$g_{k, \epsilon}$-functions
are periodic with respect to the shifts $m_j\to m_j+r$. Therefore we obtain
\baq \nonumber &&
I_{\epsilon}(pt_1,t_2,\ldots, t_{2n+3},n_1-1,n_2,\ldots,n_{2n+3})-I_{\epsilon}(t_a,n_a)
\\ && \makebox[2em]{}
=\sum_{m_1,\ldots,m_n=0}^{r-1} \sum_{\ell=1}^{n}
\left(\int_{\T^{\ell-1}\times(p^{-1}\T)\times \T^{n-\ell} }
-\int_{\T^n}\right)
g_{\ell, \epsilon}(z_j,m_j;t_a,n_a)\prod_{j=1}^n\frac{dz_j}{z_j},
\label{int-eqnCI}\eaq
where $p^{-1}\T$ denotes the contour obtained from $\T$ after
scaling it by $p^{-1}$.

The divisor points of the functions $g_\ell$ (\ref{gCI}) in the variable $z_\ell$
are determined by the following factor (theta functions were absorbed into
the gamma functions by appropriate shifts of the arguments):
\begin{eqnarray*} &&
\prod_{a=1}^{2n+3}\Big[\Gamma(pt_a z_\ell,n_a+m_\ell+\epsilon-1)
\Gamma(t_a z_\ell^{-1},n_a-m_\ell)\Big]
\\ && \makebox[1em]{} \times
\Gamma(t_{2n+4}z_\ell,n_{2n+4}+m_\ell+\epsilon)
\Gamma(p^{-1}t_{2n+4} z_\ell^{-1},n_{2n+4}-m_\ell+1).
\end{eqnarray*}
Comparing with the previous analysis of the divisor of the $\rho_{\epsilon}$-function,
we see that the equations \eqref{1} and \eqref{2} are preserved for
$P_{\rm in}$ poles associated with $t_a,\, a=1,\ldots,2n+3$, whereas, vice versa,
equations \eqref{3} and \eqref{4} remain the same for $P_{\rm out}$ poles
associated with $t_{2n+4}$. The sum $rj+k+k'$ is equal to $-n_{2n+4}+m_\ell-2$
or $n_{2n+4}-m_\ell-r$ for the analogues of \eqref{1} and \eqref{2}
with $a=2n+4$, respectively, and to $-n_{a}-m_\ell-\epsilon$
or $n_{a}+m_\ell+\epsilon-r-2$ for the analogues of \eqref{3} and \eqref{4}
with $a=1,\ldots, 2n+3$. As a result of such changes we find that
$p^{\rm max}_{\rm in}=\max\{ |t_a|,|p^{-1}t_{2n+4}|\}$ and
$p^{\rm min}_{\rm out}=\min\{ |t_{2n+4}^{-1}|,|p^{-1}t_{a}|\}$, where $a=1,\ldots, 2n+3$.

Therefore, for $|t_a|<1,\, a=1,\ldots, 2n+3$, and $|t_{2n+4}|<|p|$
the functions $g_\ell$ do not have poles in the annuli
$1\leq |z_\ell|\leq |p^{-1}|$. As a result, we can safely shrink
the integration contour $p^{-1}\T$ to $\T$ in \eqref{int-eqnCI}
and obtain zero on the right-hand side, i.e. the equality
\beq
I_{\epsilon}(pt_1,t_1,\ldots,n_1-1,n_2,\ldots)=I_{\epsilon}(t_a,n_a).
\label{rec1}\eeq
Note that  for the taken constraints on the parameters the contour $\T$ is
legitimate for both integrals on the left-hand side of \eqref{int-eqnCI},
i.e. it separates relevant sets of poles.

Due to the incommensurability condition, the integral $I_{\epsilon}(t_a,n_a)$
is a meromorphic function of the parameters $t_a$. Therefore,
equation \eqref{rec1} can be used for analytic continuation
of $I_{\epsilon}(t_a,n_a)$ from the domain $|t_1|, |t_{2n+4}|<1$ to
$|p^kt_1|, |p^{-k}t_{2n+4}|<1$ for any $k\in\mathbb{Z}$.
Therefore, iterating \eqref{rec1} $r$ times and using
the periodicity property
$$
I_{\epsilon}(\ldots, n_{b-1},n_b+r,n_{b+1},\ldots)=I_{\epsilon}(t_a,n_a), \quad b=1,\ldots, 2n+3,
$$
following from the $\rho_{\epsilon}$-function periodicity in variables $n_b$,
we obtain the equality
\beq
I_{\epsilon}(p^rt_1,t_2,\ldots,n_a)=I_{\epsilon}(t_a,n_a).
\label{p-shift}\eeq

Let us impose the additional constraint $|t_{2n+4}|<|q|$.
Then we can permute bases $p$ and $q$ in the above considerations,
apply the symmetry \eqref{sympq} and obtain the equality
\beq
I_{\epsilon}(q^rt_1,t_2,\ldots,-\epsilon-n_a)=I_{\epsilon}(t_a,-\epsilon-n_a).
\label{q-shift}\eeq
However, the integral $I_{\epsilon}$ is $r$-periodic in $n_a$-variables, i.e. the signs of $n_a$
 do not matter. Therefore, we have
$$
I_{\epsilon}(p^rt_1,t_2,\ldots,n_a)=I_{\epsilon}(q^rt_1,t_2,\ldots,n_a)=I_{\epsilon}(t_1,t_2,\ldots,n_a).
$$
Since our bases $p$ and $q$ are incommensurate, this means that $I_{\epsilon}(t_a,n_a)$
does not depend on the parameter $t_1$ and, by symmetry, on all parameters $t_a$.
Substituting this condition into recursion \eqref{rec1}, we find that, actually,
$I_{\epsilon}(t_a,n_a)$ does not depend on $n_a$ as well, i.e. it is a constant depending
only on $p, q, r, n $ and $\epsilon$, $I_{\epsilon}(t_a,n_a)= c_\epsilon(p, q, r,n)$.
Let us compute this constant $c_\epsilon$.

For that we set
$$
n_a=0,\, a=1,\ldots, n+2, \qquad n_a=-\epsilon,\, a=n+3,\ldots, 2n+4,
$$
which satisfies the discrete balancing condition,  and consider the limit
$$
t_at_{a+n+2}\to 1,\quad a=1,\ldots,n.
$$
Our analysis of the $\rho_{\epsilon}$-function
divisor structure shows that in each summation over the
discrete variables $0\leq m_j\leq r-1$ there is one integral, corresponding
to the value $m_j=0$, for which the integration contour $\T$ becomes pinched
by $2n$ pairs of poles. The $\rho_{\epsilon}$-function contains the factor
$1/\prod_{j=1}^n\Gamma(t_jt_{j+n+2},0)$ which vanishes unless it is
cancelled by the residues of poles pinching the integration contour
for all $n$ integrals simultaneously. Therefore, our problem reduces
to computation of the limit
\begin{eqnarray*} &&
\stackreb{\lim}{t_at_{a+n+2}\to 1\atop a=1,\ldots,n}\mu_\epsilon(t_a)
\int_{\T^n}\prod_{1\leq j<k\leq n}\frac{1}{\Gamma((z_jz_k)^{\pm 1},\pm\epsilon)
\Gamma((z_j/z_k)^{\pm 1},0)}
\\ && \makebox[2em]{} \times
\prod_{j=1}^n \frac{\prod_{a=1}^{n+2}\Gamma(t_a z_j,\epsilon)\Gamma(t_{a+n+2} z_j,0)
\Gamma(t_a z_j^{- 1},0)\Gamma(t_{a+n+2} z_j^{- 1},-\epsilon)}
{\Gamma(z_j^{\pm 2},\pm\epsilon)}\frac{dz_j}{z_j},
\end{eqnarray*}
where
$$
\frac{1}{\mu_\epsilon(t_a)}:=\prod_{1 \leq a < b \leq n+2} \Gamma(t_a t_b,\epsilon)
 \Gamma(t_{a+n+2} t_{b+n+2},-\epsilon)\prod_{1 \leq a, b \leq n+2} \Gamma(t_a t_{b+n+2},0).
$$
Before taking the limits $t_at_{a+n+2}\to 1,\,a=1,\ldots, n$,
we deform each $\T$ to a contour $C$ which crosses the poles
$z_j=t_a, t_{a+n+2},\, a=1,\ldots, n$, and does not touch other poles.
Again, the result does not vanish for $t_at_{a+n+2}=1$ only
if we pick up residues for all variables $z_j$ simultaneously.
Whenever two different variables $z_j$ pick up
residues from identical pole positions, we get zero
due to the functions $\Gamma((z_j/z_k)^{\pm 1},0)$ in
the integrand's denominator. Therefore, we should consider only the
residues for $z_j=t_j$ and their $n$! permutations giving identical results.
The residues for $z_j=t_{j+n+2}$ give the same result,
which results in the additional multiplier $2^n$.
So, the limit of interest is equal to
\begin{eqnarray*} &&  \makebox[-2em]{}
n!(4\pi \textup{i})^n\stackreb{\lim}{t_at_{a+n+2}\to 1\atop a=1,\ldots,n}
\mu_\epsilon(t_a)
\frac{\prod_{j=1}^n \prod_{a=1}^{n+2}
\Gamma(t_a t_j,\epsilon)\Gamma(t_{a+n+2} t_j,0)
\Gamma(t_{a+n+2} t_j^{- 1},-\epsilon)
}
{\prod_{1\leq j<k\leq n}\Gamma((t_jt_k)^{\pm 1},\pm \epsilon)
\Gamma((t_j/t_k)^{\pm 1},0) \prod_{j=1}^n\Gamma(t_j^{\pm 2},\pm\epsilon) }
\\ && \makebox[4em]{} \times
\prod_{j=1}^n \prod_{a=1}^{n+2}
\stackreb{\lim}{z_j\to t_j}(1-t_jz_j^{-1})\Gamma(t_a z_j^{-1},0).
\end{eqnarray*}
We apply now the limiting relation \eqref{rfres} and cancel common factors
from the numerator and denominator. The remaining gamma functions
disappear after substitution of the relations $t_at_{a+n+2}=1,\, a=1,\ldots, n+2,$
and $t_{n+1}t_{n+2}t_{2n+3}t_{2n+4}=pq$ due to the inversion relation
\eqref{invGamma}. As a result,
$$
c_\epsilon(p, q, r, n)=\stackreb{\lim}{t_at_{a+n+2}\to 1 \atop a=1,\ldots,n}
I_{\epsilon}(t_j,n_j)\Big|_{n_j=n_{j+n+2}+\epsilon=0 \atop j=1,\ldots,n+2}
=
\frac{(4\pi \textup{i})^n n!}{(p^r;p^r)_\infty^n(q^r;q^r)_\infty^n}
=\frac{1}{\kappa_{n}^{(r)}},
$$
as required.

Finally, by analytic continuation, we relax the restrictions
$|t_{2n+4}|<|p|,|q|$ to $|t_{2n+4}|<1$ and remove the incommensurability
constraint $t_a^nt_b^mp^kq^l\neq 1$, $n,m,k,l\in \mathbb{Z}$
(still keeping $|t_{a}|,|p|,|q|<1$). The theorem is proved.
\end{proof}

Evidently, in the final result \eqref{rfintCI} one can relax
restrictions for the $t_a$-parameters values by changing $\T$ to a
contour $C$ such that it separates the poles
$P_{\rm in}$ and  $P_{\rm out}$ for all possible values of $m_\ell$.
However, the analysis of sufficiency conditions for existence of such
a contour is a complicated task and we do not consider it here.

\section{A special $p\to 0$ limit}

Let us rewrite $n=1$ relation \eqref{rfint} in terms of the $\gamma^{(r)}$-functions.
The left-hand side expression takes the form
$$
\prod_{a=1}^6 \left[\frac{t_a^{(n_a+\epsilon/2)(n_a+\epsilon/2-1)}}{p^{(n_a+\epsilon/2)^2}}
\left(\frac{p}{q}\right)^{\frac{1}{3}(n_a+\epsilon/2)^3}\right]
\sum_{m=0}^{r-1}c_m(n_a,\epsilon),
$$
where
$$
c_m(n_a,\epsilon)=\kappa^{(r)}
\left(\frac{q}{p}\right)^{(m+\epsilon/2)^2}\int_{\mathbb{T}}
\frac{\prod_{a=1}^6\gamma^{(r)}(t_az,n_a+m+\epsilon)\gamma^{(r)}(t_az^{-1},n_a-m)}
{z^{2m+\epsilon}\gamma^{(r)}(z^{\pm2},\pm (2m+\epsilon))}\frac{dz}{z}
$$
(note that the integrand is an explicit meromorphic function of $z\in\mathbb{C}^*$).
The right-hand side can be ``simplified"
in a similar way and, after cancelling common factors, we obtain the relation
\baq   && 
\sum_{m=0}^{r-1}c_m(n_a,\epsilon)
=(-1)^\epsilon\prod_{a=1}^6 \frac{(q/p)^{\frac{1}{2}(n_a+\epsilon/2)^2}}{t_a^{n_a+\epsilon/2}}
\prod_{1\leq a<b\leq 6}\gamma^{(r)}(t_at_b,n_a+n_b+\epsilon).
\label{intfingamma}\eaq

Now we can compare our proof of this relation for $\epsilon=0$
with the one suggested by Kels in \cite{kels}.
In \cite{kels} the function $[\! [ m ]\!] := m \mod r$ is used
which is not needed in our consideration, since all
necessary finite-dimensional truncations
are guaranteed by the $r$-periodicity in all discrete variables of the
integrand in \eqref{intI}. This makes our formulae uniform in the values of discrete variables
$n_a$ and $m$ in contrast with \cite{kels}, where they
were brought to the values $1,\ldots, r-1$ by subtracting or adding multiples of $r$.

The kernel of sum-integral of \cite{kels} has the same $\gamma^{(r)}$-dependent part
as above (with $\epsilon=0$), but the multiplier in front of it looks substantially different.
In particular, for $n_a=0$ it misses our $z^{-2m}$ factor. However, discrete
arguments of the corresponding gamma functions involve $[\![ \ell]\!] $,
for some integers $\ell$, instead of simple $\ell$ in our case.
It appears, that after replacing $[\![ \ell ]\!] $ by $\ell+kr$ for appropriate
values of $k$ bringing $\ell+kr$ to
the values $0,1,\ldots,r-1$ and applying the quasiperiodicity
relations \eqref{r-per_gen}, our identity \eqref{rfint} coincides in this case
with the one suggested in \cite{kels}.

Since we use different normalizing factor for the rarefied elliptic gamma
function \eqref{regm}, even after clearing
the arguments of gamma functions from $[\![ \ell]\!] $-functions,
the left-hand and right-hand sides of our identities do not coincide
(but the ratio does). Next, an analogue of the relation \eqref{eqn-CI}
in \cite{kels} contains formal fractional powers of $z$ and it looks different
from the one in \cite{spi:short}, which we use. However, again, after
replacing $[\![ \ell ]\!] $ by appropriate expressions $\ell+kr$, these fractional
powers disappear and the finite-difference equation used in  \cite{kels}
becomes identical with ours. So, the approach suggested in this work
is equivalent to the one in \cite{kels}, but, to the author's taste,
it looks more natural. Another point is that the analysis of the divisor
points of the integrands in our case is essentially more detailed,
since we check case by case the possibilities of cancellation of
poles and zeros, whereas is \cite{kels} the divisor was forced to take a
particular form by the use of $[\![ \ell ]\!] $-functions.

Inspired by the first version of this paper, the authors of \cite{GahKels}
suggested to use the following periodic gamma function
\baq &&
\tilde \Gamma^{(r)}(z,m;p,q):= \left(\frac{z}{\sqrt{pq}}\right)^{\frac{m(m-r)}{2r}}
\left(e^{-\pi \textup{i}}\sqrt{\frac{p}{q}}\right)^{\frac{m(m-r)(2m-r)}{6r}}\gamma^{(r)}(z,m;p,q),
\label{pergamma}
\\ && \makebox[6em]{}
\tilde \Gamma^{(r)}(z,m+r;p,q)=\tilde \Gamma^{(r)}(z,m;p,q),
\label{periodicrel} \eaq
for rewriting the sum-integrals without employing the $[\![m]\!]$-function.
Note that our normalization factor coincides with the above one for $r=1$.  Clearly,
this is a multivalued function of its arguments and a particular
branch of the power of $-1=e^{-\pi \textup{i}}$ was chosen in the
considerations of \cite{GahKels}. Therefore the sum-integrals obtained
after replacing our $\Gamma^{(r)}(z,m;p,q)$ by \eqref{pergamma} will be multivalued as well.
Still, the balancing condition removes this drawback for the integration variable
$z$ and, as claimed in \cite{GahKels}, the $\epsilon=0$ rarefied elliptic beta
integral evaluation formula written in terms of \eqref{pergamma} yields
exactly the same identity as derived in the present paper. The author
checked that this is true for $\epsilon\neq 0$ as well.

Due to the periodicity, for sum-integrals written in terms of function \eqref{pergamma}
the discrete balancing condition takes the form $\sum_{a=1}^6 (n_a+\epsilon/2)=Kr$,
for arbitrary $K\in\mathbb{Z}$. Using the quasiperiodicity of
the $\gamma^{(r)}(z,m;p,q)$-function it is possible to remove this integer $K$
by a simple shift $n_a\to n_a+Kr$ for any fixed $a$. Therefore
our balancing condition (corresponding to $K=0$) is a generic one.
As a result, the approaches developed in this paper and the one of \cite{GahKels,KY} are equivalent.
An advantage of the present consideration is that all our sum-integrals are single-valued
functions of parameters, which is not so in the other case.

Let us stress that the choice of the normalization factor leading to the
periodicity \eqref{periodicrel}
is not unique. If we flip the minus sign from $\sqrt{p/q}$ to $\sqrt{pq}$,
i.e. replace in \eqref{pergamma}  $e^{-\pi \textup{i} \frac{m(m-r)(2m-r)}{6r} }$
by $e^{-\pi \textup{i} \frac{m(m-r)}{2r} }$, then the coincidence with our results
breaks down, since this will bring a nontrivial $m$-dependent multivalued factor inside
the summation $\sum_{m=0}^{r-1}$.  So, not all periodic gamma
functions work equally well. It is necessary to understand the reasons
distinguishing the choice \eqref{pergamma}.

Assume now that $r>1$ and consider the simplest $p\to 0$ limit in the above
relation \eqref{intfingamma} for a special choice of discrete parameters
$$
n_{1,2,3}=0, \, \quad
n_{4,5,6}=-\epsilon.
$$
For that we substitute into the arguments of $\gamma^{(r)}$-functions the
relation $t_6=pq/\prod_{a=1}^5t_a$ and use the inversion relation \eqref{inv0}.
Now we can take the limit $p\to 0$ for fixed $t_1,\ldots,t_5$.
Whenever the modulus of discrete arguments of the $\gamma^{(r)}$-functions
is bounded by $r$, we use the asymptotic relation
\beq \label{asymp}
\gamma^{(r)}(z,m;p,q) \stackreb{\to}{p\to 0}
\left\{
\begin{array}{cl}
(zq^{r-m};q^r)_\infty^{-1}, &   0< m \leq r,  \\
(z;q^r)_\infty^{-1}, &   m=0,  \\
\left(\frac{-p}{z}\right)^{|m|}
\frac{(p/q)^{|m|(|m|-1)/2}}{(zq^{|m|};q^r)_\infty}, &   -r< m<0.
\end{array}
\right.
\eeq
For other values of $m$ we apply first the quasiperiodicity relation \eqref{r-per_gen}
before taking the asymptotics.
Because of the symmetry $c_{r-m}=c_{m-\epsilon}$, it is sufficient to consider the asymptotics of
the coefficients $c_0,\ldots,c_\ell$ with $\ell=r/2$ (for even $r$) or $\ell=(r-1)/2$ (for odd $r$),
provided $\epsilon=0$. For $\epsilon =1$ one has either $\ell=r/2-1$ (for even $r$) or $\ell=(r-1)/2$
(for odd $r$), see \eqref{e=0red} and \eqref{e=1red}.
Combining all asymptotic terms we obtain the following picture.

For $\epsilon=0$ and $m=0,\ldots, [r/2]$ (an integral part of $r/2$) the coefficients
$c_m\propto p^{m-1/4},\, p\to0$ for both even and odd $r$.
As a result, in the limit of interest the leading asymptotics of $c_m$ corresponds to $m=0$.
After considering the leading asymptotics of the right-hand side expression in \eqref{intfingamma},
we come to the following identity (recall that $|t_a|<1$)
\beq
\frac{(q^r;q^r)_\infty}{4\pi \textup{i} }\int_{\mathbb{T}}
\frac{(\prod_{b=1}^5t_b z^{\pm1};q^r)_\infty(z^{\pm2};q^r)_\infty}
{\prod_{a=1}^5 (t_az^{\pm1};q^r)_\infty}
\frac{dz}{z}
=\frac{\prod_{a=1}^5(t_a^{-1}\prod_{b=1}^5t_b;q^r)_\infty}
{\prod_{1\leq a<b\leq 5}(t_at_b;q^r)_\infty},
\label{rahman}\eeq
which is the Rahman $q$-beta integral evaluation formula \cite{Rahman} for the base $q^r$.
We conclude that in this case the rarefied elliptic beta integral does not produce new objects.

However, for $\epsilon=1$ the situation is qualitatively different.
First we note that in this case our choice of the discrete variables $n_a$
breaks $S_6$-symmetry. Therefore the consideration depends
on whether we take $t_6\to 0$ as $p\to0$ (as above), or if one of $t_a,\, a=1,2,3,$
tends to zero. Keeping $t_1,\ldots,t_5$ fixed we repeat the same steps as above.
Then for even $r$ we find the asymptotics $c_m\propto p^{m+3/4},\, p\to0$ with $m=0,\ldots, r/2-1.$
For odd $r$ we find  $c_m\propto p^{m+3/4},\, p\to0$ with $m=0,\ldots, (r-1)/2.$
In both cases the leading term corresponds to $m=0$.
As follows from \eqref{e=1red} such a term enters twice the full sum
over $m$. Computing asymptotics of the right-hand side
expression, we come to the same leading behaviour.
After cancelling diverging factors there emerges a nontrivial relation.
Omitting the technical details of computations for both sides of the identity \eqref{intfingamma},
we come to the following statement.
\begin{theorem}
Let $r\in\mathbb{Z}_{>1}$ and the variables $t_1,\ldots,t_5, q\in\mathbb{C}$ are such that $|t_a|, |q|<1$. Then
\begin{eqnarray} \label{NEW}&&
\frac{(q^r;q^r)_\infty}{2\pi \textup{i} }\int_{\mathbb{T}}
\frac{(q^{r-1}Az,Az^{-1},q^{r-1}z^2,q z^{-2};q^r)_\infty}
{\prod_{a=1}^3 (q^{r-1}t_az,t_az^{-1};q^r)_\infty\prod_{a=4}^5 (t_az,q t_a z^{-1};q^r)_\infty}
\frac{dz}{z}
\\ && \makebox[2em]{}
=\frac{\prod_{a=1}^3(At_a^{-1};q^r)_\infty \prod_{a=4}^5(q^{r-1}At_a^{-1};q^r)_\infty}
{\prod_{1\leq a<b\leq 3}(q^{r-1}t_at_b;q^r)_\infty
\prod_{a=1}^3\prod_{b=4}^5(t_at_b;q^r)_\infty (qt_4t_5;q^r)_\infty}, \quad A=\prod_{a=1}^5t_a.
\nonumber \end{eqnarray}
\end{theorem}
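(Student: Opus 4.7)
The plan is to specialize the $n=1$ identity \eqref{intfingamma} to $\epsilon=1$ with
$n_{1,2,3}=0$, $n_{4,5,6}=-1$ (which satisfies the discrete balancing
$\sum_a n_a+3\epsilon=0$) and to extract the leading term of both sides as $p\to 0$
with $t_1,\ldots,t_5$ and $q$ held fixed. As a preparation I would eliminate
$t_6=pq/A$, $A:=\prod_{a=1}^5 t_a$, using the continuous balancing and apply the
inversion relation \eqref{inv0} to every $\gamma^{(r)}$-factor involving $t_6$, both
in the integrand of $c_m$ and in the $\gamma^{(r)}(t_at_b,n_a+n_b+\epsilon)$
products on the right-hand side. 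This converts them into reciprocals of
$\gamma^{(r)}$-functions of $A/t_a$ or $Az^{\pm 1}$, whose $p\to 0$ asymptotics
are accessible through \eqref{asymp}, with the quasiperiodicity relation
\eqref{r-per_gen} invoked once (with $k=1$, $m=1$) to bring the index of
$\gamma^{(r)}(Az,r+1;p,q)$ down to $1$.

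Using the reduction \eqref{e=1red}, the full sum $\sum_{m=0}^{r-1} c_m$ is written
as a combination of $c_0,\ldots,c_\ell$ in which $c_0$ appears with multiplicity
$2$. Substituting \eqref{asymp} into the definition of $c_m$ and combining the
$(q/p)^{(m+1/2)^2}$ prefactor with the explicit $z^{-(2m+1)}$ and with the
$(-p/z)^{|m|}(p/q)^{|m|(|m|-1)/2}$ from the negative-index branch, one checks that
$c_m=O(p^{m+3/4})$ as $p\to 0$. Hence only $c_0$ contributes to leading order, and
the resulting factor of $2$ turns the $(4\pi\textup{i})^{-1}$ of $\kappa^{(r)}$
into the $(2\pi\textup{i})^{-1}$ of \eqref{NEW}. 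For the $m=0$ integrand, the
$a=1,2,3$ factors become the Pochhammers
$(q^{r-1}t_az;q^r)_\infty^{-1}(t_az^{-1};q^r)_\infty^{-1}$, the $a=4,5$ factors
yield $(t_az;q^r)_\infty^{-1}(qt_az^{-1};q^r)_\infty^{-1}$ together with an extra
$-pz/t_a$ each, the $\gamma^{(r)}(z^{\pm 2},\pm 1)$ denominator produces
$(q^{r-1}z^2;q^r)_\infty(qz^{-2};q^r)_\infty/(-pz^2)$, and the inverted
$t_6$-pair becomes $-Az/q\cdot(Az^{-1};q^r)_\infty(q^{r-1}Az;q^r)_\infty$.

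On the right-hand side of \eqref{intfingamma} the overall prefactor
$-(q/p)^{3/4}\sqrt{pq}/(t_1t_2t_3)$ is assembled from $\prod_a t_a^{-(n_a+\epsilon/2)}$
after the substitution $t_6=pq/A$, while the $\gamma^{(r)}(t_at_b,n_a+n_b+\epsilon)$
products turn into $q^r$-Pochhammers by \eqref{asymp}, with an extra $-p/(t_4t_5)$
from $\gamma^{(r)}(t_4t_5,-1)$ and $-A/(qt_a)$, $a=4,5$, from the inverted factors
involving $t_6$. Collecting everything, both sides factor as
$(p/q)^{3/4}\,t_1 t_2 t_3$ times a product of $q^r$-Pochhammer symbols; cancelling
this common prefactor produces exactly \eqref{NEW}. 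The main technical obstacle is
precisely this $p$- and $q$-power bookkeeping: the contributions from the
$(q/p)^{(m+\epsilon/2)^2}$ prefactors, the negative-index asymptotics, the single
use of \eqref{r-per_gen}, and the $\sqrt{pq}$ arising from $\sqrt{t_6}$ have to
combine to the same monomial $(p/q)^{3/4}\,t_1 t_2 t_3$ on both sides for the
identity to emerge, after which the matching of the surviving
$q^r$-Pochhammer symbols with those in \eqref{NEW} is mechanical.
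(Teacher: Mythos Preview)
Your proposal is correct and follows essentially the same route as the paper: specialize \eqref{intfingamma} at $\epsilon=1$, $n_{1,2,3}=0$, $n_{4,5,6}=-1$, eliminate $t_6=pq/A$ via \eqref{inv0}, use \eqref{e=1red} together with the asymptotics \eqref{asymp} (and one application of \eqref{r-per_gen}) to see that $c_m\propto p^{m+3/4}$ so only $2c_0$ survives, and then match the resulting $q^r$-Pochhammer symbols. The paper suppresses exactly the bookkeeping that you spell out, merely stating ``Omitting the technical details of computations for both sides of the identity''; your explicit tracking of the elementary $p$-, $q$-, $z$-, and $t_a$-monomials is the content hidden behind that phrase.
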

Choosing $n_{1,2,3}=-\epsilon$ and $n_{4,5,6}=0$ and repeating the same limiting procedure,
we find again relation \eqref{NEW} with flipped multipliers $q$ and $q^{r-1}$ in the
arguments of $q$-shifted factorials.
The derived formulae represent a new layer of $q$-hypergeometric  identities
which was not considered in the $q$-treatise \cite{GR}.
It is distinguished by the breaking of $S_5$ symmetry of \eqref{rahman}
and, especially, of the $z\to z^{-1}$ symmetry of the integrand, which
is quite unusual. Note that the restriction $r>1$ in \eqref{NEW} comes from
the fact that for $r=1$ the original relation \eqref{intfingamma}
reduces to the standard elliptic beta integral which does not yield new
relations in the $p\to 0$ limit.

Let us denote $t_a=q^{ru_a}$, $A=q^{rU}$, and
take the limit $q\to 1^-$ in \eqref{NEW} for fixed $u_a$. Using the uniform limiting relation
for the Jackson $q$-gamma function \cite{aar,GR}
$$
\Gamma_q(x):=\frac{(q;q)_\infty}{(q^x;q)_\infty}(1-q)^{1-x} \stackreb{\to}{q\to 1^-} \Gamma(x),
$$
where $\Gamma(x)$ is the standard Euler gamma function, we come to the following statement.
\begin{theorem}
Let $r\in\mathbb{Z}_{>1}$ and the variables $u_a\in\mathbb{C},\, a=1,\ldots, 5,$ are such that
$\textup{Re}(u_a)>0$. Then the following plain hypergeometric integral evaluation holds true
\begin{eqnarray} \nonumber &&
\frac{1}{2\pi \textup{i}}
\int_{-\textup{i}\infty}^{\textup{i}\infty}
\frac{\prod_{a=1}^3\Gamma(\frac{r-1}{r}+u_a+x,u_a-x)
\prod_{a=4}^5 \Gamma(u_a+x,\frac{1}{r}+u_a-x)}
{\Gamma(\frac{r-1}{r}+U+x,U-x,\frac{r-1}{r}+2x,\frac{1}{r}-2x)}dx
\\ && \makebox[2em]{}
=\frac{\prod_{1\leq a<b\leq 3}\Gamma(\frac{r-1}{r}+u_a+u_b)
\prod_{a=1}^3\prod_{b=4}^5\Gamma(u_a+u_b)\, \Gamma(\frac{1}{r}+u_4+u_5)}
{\prod_{a=1}^3\Gamma(U-u_a)\prod_{a=4}^5\Gamma(\frac{r-1}{r}+U-u_a)},
\label{NEWq=1}\end{eqnarray}
where $U=\sum_{a=1}^5u_a$.
\end{theorem}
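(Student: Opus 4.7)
The plan is to derive \eqref{NEWq=1} as the $q\to 1^-$ scaling limit of \eqref{NEW}, following the prescription outlined just before the theorem statement. First, I substitute $t_a = q^{ru_a}$ and $A = q^{rU}$, and introduce the new integration variable $x$ via $z = q^{rx}$, so that $dz/z = r(\ln q)\, dx$ and the unit circle $\T$ is mapped onto the imaginary segment from $-\pi \textup{i}/(r|\ln q|)$ to $\pi \textup{i}/(r|\ln q|)$, which fills out the whole line $\textup{Re}(x)=0$ as $q\to 1^-$.

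The core computation is to rewrite every infinite $q^r$-Pochhammer appearing in \eqref{NEW} in terms of the Jackson $q^r$-gamma via the identity $(q^{ry};q^r)_\infty = (q^r;q^r)_\infty (1-q^r)^{1-y}/\Gamma_{q^r}(y)$, and then to book-keep the resulting powers of $(q^r;q^r)_\infty$ and $(1-q^r)$. With four Pochhammers in the numerator and ten in the denominator of the left-hand integrand, together with the $(q^r;q^r)_\infty$ prefactor and the factor $r\ln q \sim -(1-q^r)$ produced by the measure, a direct count shows that these combine to exactly the same total power of $(1-q^r)$ and $(q^r;q^r)_\infty$ as the five-over-ten Pochhammer balance on the right-hand side, so all divergent factors cancel. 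What remains is a pure ratio of $\Gamma_{q^r}$-functions, and the uniform Jackson limit $\Gamma_{q^r}(y)\to\Gamma(y)$ on compact subsets of the right half-plane converts it termwise to the ratio of Euler gammas appearing in \eqref{NEWq=1}.

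The main obstacle is justifying the interchange of the $q\to 1^-$ limit with integration over a contour that simultaneously expands to infinity. On the line $x=\textup{i}t$ the ten numerator gamma factors $\Gamma(\,\cdot\,\pm x)$ each decay like $e^{-\pi|t|/2}$ by Stirling's formula, giving combined decay $e^{-5\pi|t|}$, while the four denominator factors $\Gamma(\tfrac{r-1}{r}+U+x,U-x,\tfrac{r-1}{r}+2x,\tfrac{1}{r}-2x)$ contribute only growth of order $e^{3\pi|t|}$ in reciprocal, so the limit integrand decays like $e^{-2\pi|t|}$ and is absolutely integrable. A $q$-uniform version of this Stirling estimate for $|\Gamma_{q^r}(a+\textup{i}t)|$, derivable from its product representation, provides a dominating function independent of $q$ and licenses dominated convergence. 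The hypothesis $\textup{Re}(u_a)>0$ is used here to keep all poles of the integrand a positive distance from the contour for every $q$ sufficiently close to $1$, so no pole crosses during the limit. Applying the same Pochhammer-to-$\Gamma_{q^r}$ substitution to the right-hand side of \eqref{NEW} and passing to the limit produces the ratio of Euler gammas on the right-hand side of \eqref{NEWq=1}, and the equality of the two limits completes the argument.
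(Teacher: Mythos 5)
Your proposal follows exactly the paper's route: substitute $t_a=q^{ru_a}$, $A=q^{rU}$ into \eqref{NEW}, rewrite the $q^r$-Pochhammer symbols via the Jackson $q$-gamma function, check that the divergent powers of $(q^r;q^r)_\infty$ and $(1-q^r)$ cancel between the two sides, and pass to the limit $q\to1^-$ with the contour opening up to the imaginary axis. Your bookkeeping of the divergent factors and the Stirling/dominated-convergence justification (which the paper leaves implicit, merely invoking the uniform limit $\Gamma_q(x)\to\Gamma(x)$) are correct, so the argument is sound and essentially identical to the paper's.
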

A similar formula is obtained from \eqref{NEW} after flipping
$q$ and $q^{r-1}$ -- it corresponds to flipping $1/r$ and $(r-1)/r$ in
the arguments of gamma functions of \eqref{NEWq=1}.

Probably for other admissible values of discrete variables $n_a$ the limit $p\to 0$
produces other relations similar to \eqref{NEW} containing powers $q^k,\, k=2,\dots, r-2$.
We do not consider such possibilities here, since the derived result requires
a systematic consideration of not only all possible exact integral evaluations emerging in this way
or symmetry transformations (which we skip), but an investigation of the question what kind of
orthogonal polynomials and biorthogonal rational functions correspond to the measures
described by the above exactly computable $q$- and ordinary hypergeometric beta integrals and
their multivariate extensions. We conclude that the rarefied elliptic hypergeometric functions
contain intriguingly new phenomena at the lower hypergeometric levels.

\section{A $C_n$ rarefied elliptic beta integral of type II}

A rarefied analogue of the computable type II $C_n$-integral of \cite{vDS1}
has the following form. For convenience we denote the rank of the root system
(and the dimension of the related integral) as $d$, i.e. we consider the
root system $C_d$.

\begin{theorem}
Let nine continuous parameters $t, t_a (a=1,\ldots , 6), p, q \in \mathbb{C}^*$
and eight discrete variables $n,n_a\in\mathbb{Z},\, \epsilon=0,1,$ satisfy the
constraints $|p|, |q|,$ $|t|,$ $|t_a| <1$ and the balancing condition
\begin{equation}
t^{2d-2}\prod_{a=1}^6t_a=pq, \qquad 2n (d-1)+\sum_{a=1}^6n_a+3\epsilon=0.
\label{balII}\end{equation}
 Then
\begin{eqnarray}\nonumber &&\makebox[0em]{}
\kappa_d^{(r)}\sum_{m_1,\ldots,m_d=0}^{r-1}\int_{\T^d} \prod_{1\leq j<k\leq d}
\frac{\Gamma (tz_j^{\pm 1} z_k^{\pm 1},n\pm (m_j+\epsilon/2)\pm (m_k+\epsilon/2))}
{\Gamma (z_j^{\pm 1} z_k^{\pm 1},\pm (m_j+\epsilon/2)\pm (m_k+\epsilon/2))}
\\  \nonumber && \makebox[4em]{} \times
\prod_{j=1}^d\frac{\prod_{a=1}^6\Gamma (t_az_j,n_a +m_j+\epsilon)\Gamma(t_az_j^{-1},n_a- m_j)}
{\Gamma (z_j^{\pm2},\pm (2m_j+\epsilon))}
\frac{dz_j}{z_j}
\\  && \makebox[0em]{}
= \prod_{j=1}^d\left(\frac{\Gamma (t^j, nj)}{\Gamma (t,n)}
\prod_{1\leq a<b\leq 6}\Gamma (t^{j-1}t_at_b, n(j-1)+n_a+n_b+\epsilon)\right).
\label{rfintCII}\end{eqnarray}
\end{theorem}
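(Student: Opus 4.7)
The plan is to adapt the algebraic reduction of van Diejen and Spiridonov~\cite{vDS1,vDS2} (where the standard type~II $C_n$ evaluation is derived purely algebraically from the type~I $C_n$ evaluation) to the rarefied setting. The input is the already-established type~I identity \eqref{rfintCI}; the target is the stated identity \eqref{rfintCII}.

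I would argue by induction on $d$. The base case $d=1$ is immediate: both cross products $\prod_{1\leq j<k\leq d}$ on the two sides of \eqref{rfintCII} are empty, the outer product on the right collapses to $(\Gamma^{(r)}(t,n;p,q)/\Gamma^{(r)}(t,n;p,q))\prod_{1\leq a<b\leq 6}\Gamma^{(r)}(t_at_b,n_a+n_b+\epsilon;p,q)$, and the balancing $t^{2d-2}\prod t_a=pq$ reduces to $\prod t_a=pq$, so the statement becomes exactly the rarefied elliptic beta integral \eqref{rfint}. For the inductive step, I would introduce auxiliary continuous variables $w_1,\ldots,w_{d-1}\in\mathbb{T}$ with discrete partners $m_1,\ldots,m_{d-1}\in\mathbb{Z}$ and specialize the $2d+4$ continuous parameters of \eqref{rfintCI} as $s_a=t_a$ (discrete partner $n_a$) for $a=1,\dots,6$, and pair-wise for $l=1,\dots,d-1$ as $s_{6+2l-1}=tw_l$, $s_{6+2l}=tw_l^{-1}$ with discrete shifts $n-m_l$ and $n+m_l+\epsilon$. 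The continuous balancing $\prod s_a=pq$ then becomes $t^{2d-2}\prod_{a=1}^6t_a=pq$, the discrete balancing reduces to \eqref{balII}, and the extra numerator factors combine via the shorthand to $\prod_j\prod_l\Gamma^{(r)}(tz_j^{\pm1}w_l^{\pm1},n\pm(m_j+\epsilon/2)\pm(m_l+\epsilon/2);p,q)$, i.e.\ exactly the $z$--$w$ cross coupling that appears in the type~II kernel restricted to the first $d$ and last $d-1$ coordinates.

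Applying \eqref{rfintCI} under this specialization produces an identity whose left-hand side factors through the type~II $C_d$ kernel in $(z_j,m_j)$ tied to an auxiliary $(w_l,m_l)$-structure, while the right-hand side splits through the pair products $\Gamma^{(r)}(s_as_b,\ldots)$ into (i) diagonal factors $\prod_l\Gamma^{(r)}(t^2,2n+\epsilon;p,q)$, (ii) cross factors $\prod_{l<l'}\Gamma^{(r)}(t^2w_l^{\pm1}w_{l'}^{\pm1},\ldots;p,q)$ displaying precisely the type~II $C_{d-1}$ cross-coupling in the $w$'s, and (iii) mixed factors $\prod_a\prod_l\Gamma^{(r)}(tt_aw_l^{\pm1},\ldots;p,q)$ that play the role of external $C_{d-1}$ parameters. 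Rearranging so that the type~II $C_d$ sum-integral in $(z_j,m_j)$ appears on one side multiplied by a type~II $C_{d-1}$ sum-integrand in $(w_l,m_l)$, and then applying the inductive hypothesis at rank $d-1$ to the $(w_l,m_l)$ sum-integration, collapses the latter to the explicit product matching the type~I right-hand side, thereby forcing the asserted identity at rank $d$ with the $\Gamma^{(r)}(t^j,nj;p,q)$-tower assembled iteratively.

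The main obstacle is identifying the correct specialization that cleanly separates the type~II $C_d$ sum-integrand in $(z_j,m_j)$ from the auxiliary $(w_l,m_l)$-structure. In the standard (non-rarefied) case this works out neatly because of the form of the type~I cross factors, but here one must additionally track the discrete balancing condition and the quasiperiodicity phases from \eqref{quasi1} (analogous to the lengthy but mechanical cancellation noted in \eqref{periodicity} for the type~I integrand) and keep coherent the $\epsilon/2$ half-shifts implicit in \eqref{ker_ebetaCI} through the inversion relation \eqref{invGamma}. A secondary technical point is that placing the specialized parameters $tw_l^{\pm1}$ on the unit circle $|t|<1$ may require deforming the $z$-contour before invoking \eqref{rfintCI}, with \eqref{rfintCII} recovered by analytic continuation in the $w_l$ at the end.
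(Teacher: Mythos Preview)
Your overall strategy---induction on $d$ with the type~I evaluation \eqref{rfintCI} as the engine---matches the paper's, but the implementation has a genuine gap. The claim that after your specialization the type~I $C_d$ integrand ``factors through the type~II $C_d$ kernel'' is false: the type~II kernel carries the numerator $\prod_{j<k}\Gamma(tz_j^{\pm1}z_k^{\pm1},\ldots)$, and no choice of the $2d+4$ type~I parameters can manufacture such $z_j$--$z_k$ cross factors, since every type~I numerator factor involves only a single $z_j$. Likewise, the right-hand side of your specialized \eqref{rfintCI} yields cross terms $\Gamma(t^2w_l^{\pm1}w_{l'}^{\pm1},\ldots)$ with coupling $t^2$, not $t$, and contains no $1/\Gamma(w_l^{\pm1}w_{l'}^{\pm1},\ldots)$ or $1/\Gamma(w_l^{\pm2},\ldots)$ denominators---so what sits there is not a type~II $C_{d-1}$ integrand either.

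The paper's mechanism is different in exactly this respect. One writes down a $(2d{-}1)$-fold sum-integral over $(z_j,m_j)_{j=1}^d$ and $(w_k,l_k)_{k=1}^{d-1}$ in which \emph{both} a $C_d$ and a $C_{d-1}$ denominator structure are inserted by hand, cross-coupled through $\Gamma(t^{1/2}z_j^{\pm1}w_k^{\pm1},\ldots)$ (note $t^{1/2}$, not $t$), together with an extra pair of $w$-factors to close the balancing and a second discrete variable $\delta\in\{0,1\}$ attached to the $w$-side. Evaluating the $w$-block by type~I $C_{d-1}$ then \emph{produces} the missing numerator $\Gamma((t^{1/2}z_j)(t^{1/2}z_k^{\pm1}),\ldots)=\Gamma(tz_jz_k^{\pm1},\ldots)$ and turns the remaining $z$-integral into $I_{d,\epsilon}^{(r)}$; evaluating instead the $z$-block by type~I $C_d$ produces $\Gamma(tw_kw_{k'}^{\pm1},\ldots)$ and leaves $I_{d-1,\delta}^{(r)}$ with parameters $t^{1/2}t_a$. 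Equating the two computations yields the rank recursion, which unwinds from the $d=1$ base to \eqref{rfintCII}. The point you are missing is that the type~II cross factors are not inherited from a specialization but are \emph{generated} as the output of a type~I evaluation in the transverse variables; this is what forces the $t^{1/2}$ coupling, the explicit double sum-integral, and the auxiliary discrete parameter $\delta$.
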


\begin{proof}
The general scheme of proving formula \eqref{rfintCII} will be the same
as in the original considerations of \cite{vDS2}.
We assume that the variables $t_6$ and $n_6$ are excluded with the help of
the balancing condition and denote the expression on the left-hand side of
\eqref{rfintCII} as $I_{d,\epsilon}^{(r)}(t,t_1,\ldots,t_5,n,n_1,\ldots,n_5)$.
Consider now the following sum of $(2d-1)$-tuple integrals
\begin{eqnarray}
\label{compintII}
&&
\kappa_d^{(r)}\kappa_{d-1}^{(r)}\sum_{m_1,\ldots,m_d=0}^{r-1}
\sum_{l_1,\ldots,l_{d-1}=0}^{r-1}\int_{\T^{2d-1}}\prod_{j=1}^d\frac{dz_j}{z_j}
\prod_{k=1}^{d-1}\frac{dw_k}{w_k}
\\ && \nonumber \makebox[2em]{} \times
\prod_{1\leq j<k\leq d}\frac{1}{\Gamma (z_j^{\pm 1} z_k^{\pm 1},\pm (m_j+\epsilon/2)
\pm (m_k+\epsilon/2))}
\\  \nonumber && \makebox[3em]{} \times
 \prod_{j=1}^n\frac{\prod_{a=0}^5\Gamma (t_az_j^{\pm1},n_a+\frac{\epsilon}{2}\pm (m_j+\frac{\epsilon}{2}))}
{\Gamma (z_j^{\pm2},\pm (2m_j+\epsilon))}
\\  \nonumber && \makebox[4em]{} \times
\prod_{j=1}^d\prod_{k=1}^{d-1}
\Gamma (t^{1/2}z_jw_k,n +\frac{\epsilon}{2}+\frac{\delta}{2}\pm(m_j+\frac{\epsilon}{2})
\pm( l_k +\frac{\delta}{2})
\\ && \makebox[4em]{} \times
\prod_{1\leq j<k\leq d-1}\frac{1}{\Gamma (w_j^{\pm 1} w_k^{\pm 1},
\pm (l_j+\frac{\delta}{2})\pm (l_k+\frac{\delta}{2}))}
\prod_{k=1}^{d-1}\frac{1}{\Gamma (w_k^{\pm2},\pm (2l_k+\delta))}
\nonumber \\ && \makebox[-1em]{} \times
\prod_{k=1}^{d-1}
\frac{\Gamma (w_k^{\pm1} t^{d-3/2}\prod_{a=1}^5t_a,
(2d-3)(n+\frac{\epsilon}{2}+\frac{\delta}{2})+\sum_{a=1}^5(n_a+\frac{\epsilon}{2})
\pm (l_k+\frac{\delta}{2}))}
{\Gamma (w_k^{\pm 1} t^{2d-3/2}\prod_{a=1}^5t_a,
(4d-3)(n+\frac{\epsilon}{2}+\frac{\delta}{2})+\sum_{a=1}^5(n_a+\frac{\epsilon}{2})
\pm (l_k+\frac{\delta}{2}))  }
\nonumber\end{eqnarray}
with $|t|, |t_a|<1$ ($a=0,\ldots,5$), $\epsilon=0,1,\, \delta =0,1$, and
$$
t^{d-1}\prod_{a=0}^5t_a=pq, \qquad (d-1)(2n+\epsilon+\delta)
+\sum_{a=0}^5n_a+3\epsilon=0.
$$
Integration over the variables $w_k$ and summation over $l_k$ with the help
of formula \eqref{rfintCI} brings expression \eqref{compintII} to the form
$$
\frac{\Gamma (t,2n+\epsilon + \delta)^d}{\Gamma (t^d,(2n+\epsilon + \delta)d)}\;
I_{d,\epsilon}^{(r)}(t,t_1,\ldots,t_5,2n+\epsilon + \delta,n_1,\ldots,n_5),
$$
where the balancing condition \eqref{balII} is assumed with $n$ replaced
by $2n+\epsilon + \delta$.

Because the integrand is bounded on the contour of integration,
we can change the order of integrations. Then the integration over
$z_k$-variables and summation over $m_k$ with the help of formula \eqref{rfintCI}
converts expression \eqref{compintII} to
\begin{eqnarray*} &&
\Gamma (t,2n+\epsilon + \delta)^{d-1} \prod_{0\leq a< b\leq 5} \Gamma (t_at_b,n_a+n_b+ \epsilon)\,
\\ && \makebox[2em]{} \times
I_{d-1,\delta}^{(r)}(t,t^{1/2}t_1,\ldots,t^{1/2}t_5,2n+\epsilon + \delta,
n+n_1+\epsilon,\ldots,n+n_5+\epsilon).
\end{eqnarray*}
As a result, we obtain a recurrence relation connecting $I_{d,\epsilon}^{(r)}$-functions
of different rank $d$ and different $\epsilon$-variables:
\begin{eqnarray*} &&
\frac{I_{d,\epsilon}^{(r)}(t,t_1,\ldots,t_5,2n+\epsilon + \delta,n_1,\ldots,n_5)}
{I_{d-1,\delta}^{(r)}(t,t^{1/2}t_1,\ldots,t^{1/2}t_5,2n+\epsilon
+ \delta,n+n_1+\epsilon,\ldots,n+n_5+\epsilon)}
\\ && \makebox[2em]{}
= \frac{\Gamma (t^d,(2n+\epsilon + \delta)d)}{\Gamma (t,2n+\epsilon + \delta)}\makebox[-0.5em]{}
\prod_{0\leq a<b\leq 5}\makebox[-0.5em]{}\Gamma (t_at_b,n_a+n_b+ \epsilon).
\end{eqnarray*}
For different choices of $\epsilon$ and $\delta$  one can reach both
even and odd values of the sum $2n+\epsilon + \delta$, which can be redenoted as an arbitrary
integer $n\in\mathbb{Z}$. Then, using known initial condition for $d=1$ \eqref{rfint},
 we find \eqref{rfintCII} by recursion.
\end{proof}

Evidently, for $d=1$ both multiple integrals \eqref{rfintCI}
and \eqref{rfintCII} reduce to the rarefied elliptic beta integral \eqref{rfint}.
The relation \eqref{rfintCII} represents currently the most complicated
known generalization of the Selberg integral along the lines
introduced by Gustafson \cite{gust1}.

\section{An analogue of the Euler-Gauss hypergeometric function}

For the same values of the discrete variables $\epsilon$ and $\delta$ as
in the previous section, consider the following double sum of double integrals
\baq &&  \nonumber
\sum_{m=0}^{r-1}\sum_{l=0}^{r-1}
 \int_{\T^2}\Gamma (fz^{\pm1} w^{\pm1},h+\frac{\epsilon}{2}+\frac{\delta}{2}
 \pm (m+\frac{\epsilon}{2})\pm (l+\frac{\delta}{2}))
\\ && \makebox[2em]{} \times
\frac{\prod_{a=1}^4\Gamma (t_az^{\pm1}, n_a+\frac{\epsilon}{2}\pm (m+\frac{\epsilon}{2}))
\Gamma ( s_aw^{\pm1},k_a+\frac{\delta}{2}\pm (l+\frac{\delta}{2}))}
{\Gamma (z^{\pm 2},\pm (2m+\epsilon))\Gamma (w^{\pm 2},\pm (2l+\delta))} \frac{dz}{z}\frac{dw}{w},
\nonumber \eaq
where the variables $n_a,k_a,h\in \mathbb{Z}$ and $t_a,s_a,f\in \mathbb{C}^*$
satisfy the constraints $|t_a|, |s_a|,|f|, <1$ and the balancing conditions
\beq
f^2\prod_{a=1}^4t_a= f^2\prod_{a=1}^4s_a=pq, \qquad
2h+\epsilon+\delta=-\sum_{a=1}^4(n_a+\frac{\epsilon}{2})=-\sum_{a=1}^4 (k_a+\frac{\delta}{2}).
\label{bal1}\eeq
We see that for different values of $\epsilon$ and $\delta$,
the sums $\sum_{a=1}^4n_a$ and $\sum_{a=1}^4 k_a$ can be both (simultaneously) odd or even integers.

Because of the imposed constraints, the contour $\T$ is legitimate
for computing sum-integrals over $(z,m)$ or $(w,\ell)$ with the help
of formula \eqref{rfint}. Integrate first over $z$ and sum over $m$.
Then, using Fubini's theorem, we change the order and integrate first over $w$
and sum over $l$. This yields the following identity.
\begin{eqnarray}\label{trafo} && \makebox[-2em]{}
\sum_{l=0}^{r-1}\int_{\T} \frac{\prod_{a=1}^4\Gamma (ft_aw^{\pm1},h+n_a+\epsilon+\frac{\delta}{2}
\pm (l+\frac{\delta}{2}))
\Gamma (s_aw^{\pm1},k_a+\frac{\delta}{2}\pm (l+\frac{\delta}{2}))}{\Gamma (w^{\pm 2},\pm (2l+\delta))}
\frac{dw}{w}
 \\  \nonumber && \makebox[-1em]{}
= \prod_{1\leq a<b\leq 4}\frac{\Gamma (s_as_b,k_a+k_b+\delta)}
{\Gamma (t_at_b,n_a+n_b+\epsilon)}
\\ \nonumber && \makebox[0em]{} \times
\sum_{m=0}^{r-1}
\int_{\T} \frac {\prod_{a=1}^4\Gamma (t_az^{\pm1},n_a+\frac{\epsilon}{2}\pm (m+\frac{\epsilon}{2}))
\Gamma (fs_az^{\pm1},h+k_a+\delta+\frac{\epsilon}{2}\pm (m+\frac{\epsilon}{2}))}
{\Gamma (z^{\pm 2},\pm (2m+\epsilon))} \frac{dz}{z}.
\nonumber\end{eqnarray}

Let us define the rarefied elliptic hypergeometric function
\beq
V_\epsilon^{(r)}(t_a,n_a;p,q)=\kappa^{(r)}
\sum_{m=0}^{r-1}\int_{\T}
\frac {\prod_{a=1}^8\Gamma (t_az,n_a+\epsilon+ m)\Gamma (t_az^{-1},n_a- m)}
{\Gamma (z^{\pm 2},\pm (2m+\epsilon))} \frac{dz}{z},
\label{rfV}\eeq
where $t_a\in \mathbb{C}^*\, ,|t_a|<1$, $n_a\in\mathbb{Z}\, , \epsilon=0,1$, and
\beq
\prod_{a=1}^8 t_a=(pq)^2, \qquad \sum_{a=1}^8n_a+4\epsilon=0.
\label{bal2}\eeq
As usual, the contour $\T$ separates geometric progressions of poles
converging to zero from their partners going to infinity.
Other domains of values of the parameters are reached
by analytic continuation.
Both, the function itself \eqref{rfV} and the balancing condition \eqref{bal2},
are invariant with respect to the group $S_8$ permuting parameters $t_a$
and $n_a$ (which is the Weyl group of the root system $A_7$).
For $r=1$ this is the elliptic analogue
of the Euler-Gauss hypergeometric function introduced in \cite{spi:theta2},
$V_\epsilon^{(1)}(t_a,n_a;p,q)\equiv V(t_a;p,q)$.

Suppose that parameters $t_7,t_8, n_7,$ and $n_8$ satisfy the constraints
$t_7t_8=pq$ and $n_7+n_8+\epsilon=0$. Then we have
$$
\Gamma^{(r)}(t_7z,n_7+m+\epsilon)\Gamma^{(r)}(t_8z^{-1},n_8-m)=1
$$
and these parameters drop out completely from the $V^{(r)}$-function,
which thus becomes equal to the rarefied elliptic beta integral.

Quasiperiodicity of the rarefied elliptic gamma function leads
to the relation
\begin{eqnarray}\nonumber 
&&
V_\epsilon^{(r)}(\ldots, n_b+r,\ldots, n_c-r,\ldots;p,q)
=V_\epsilon^{(r)}(t_a,n_a;p,q)
\\ && \makebox[2em]{} \times
\left[ t_b^{r+2n_b+\epsilon} t_c^{r-2n_c-\epsilon}(p^{1-n_b-n_c-\epsilon}
q^{1+n_b+n_c+\epsilon})^{n_c-n_b-r}
\right]^{r-1}.
\nonumber\end{eqnarray}
For odd $r$ this relation allows one to convert all $n_a$ to even numbers, i.e.
without loss of generality, for odd $r$ we can assume that all $n_a$ are even.

Substituting definition \eqref{rfV} into relation \eqref{trafo}, we obtain
the transformation property of the $V_\epsilon^{(r)}$-function
\begin{eqnarray}\label{E7trafo1} &&  \makebox[-1em]{}
V_\epsilon^{(r)}(t_a,n_a;p,q)=V_\delta^{(r)}(s_a,k_a;p,q)
\\ && \makebox[2em]{} \times
\prod_{1\leq b<c\leq 4}\Gamma (t_bt_c,n_b+n_c+\epsilon)
\Gamma (t_{b+4}t_{c+4},n_{b+4}+n_{c+4}+\epsilon),
\nonumber\end{eqnarray}
where $$
\left\{
\begin{array}{cl}
s_a =f t_a,&  a=1,2,3, 4,  \\
s_a = f^{-1} t_a, &  a=5,6,7, 8,
\end{array}
\right.;
\quad f=\sqrt{\frac{pq}{t_1t_2t_3t_4}}=\sqrt{\frac{t_5t_6t_7t_8}{pq}},
$$
\begin{equation}
\left\{
\begin{array}{cl}
k_a= n_a-\frac{1}{2}(\sum_{b=1}^4n_b+\epsilon+\delta), &a=1,2,3, 4,  \\
k_a= n_a-\frac{1}{2}(\sum_{b=5}^8n_b+\epsilon+\delta), &  a=5,6,7, 8,
\end{array}
\right.
\label{ntok}\end{equation}
and is it assumed that $|t_a|, |s_a|<1$.
In the space of continuous parameters $t_a$, the map $t_a\to s_a$ is
the key reflection transformation extending $S_8$ to $W(E_7)$,
the Weyl group of the exceptional root system $E_7$, in the same way as
in the $r=1$ case.

However, in the space of discrete variables $n_a$ the situation is
more complicated. The function $V_\epsilon^{(r)}$ does not depend
on $\delta$, which has the appearance of a free parameter. However, this
is not true -- the above transformation is meaningful only when
$\sum_{b=1}^4n_b+\epsilon+\delta$ is an even integer, and it is from this condition
that the value of $\delta$ is determined. So, for even or odd
$\sum_{b=1}^4n_b+\epsilon$ one should take $\delta=0$ or
$\delta=1$, respectively. As mentioned above, for odd $r$ all $n_a$
can be taken even, and in this case one has $\delta=\epsilon$.
The same situation holds when all $n_a$ are odd. However,
when $r$ is even and variables $n_a$ take both odd and even integer values,
the transformation law of discrete variables does not look
$S_8$ symmetric, and $\delta\neq\epsilon$ cases are allowed.
In terms of the variables $n_a'=n_a+\epsilon/2$ and $k_a'=k_a+\delta/2$
the transformation \eqref{ntok} takes the standard reflection form
$$
k_a'= n_a'-\frac{1}{2}\sum_{b=1}^4n_b',\, a=1,2,3,4,  \quad
k_a'= n_a'-\frac{1}{2}\sum_{b=5}^8n_b',\,   a=5,6,7,8.
$$
Note that $n_a'$ and $k_a'$ may be not integer valued and the choice of
$\delta$ depends on $n_a$. Therefore, the action of full $W(E_7)$
symmetry on discrete variables $n_a$ gets a curious deformed
form. The detailed analysis of
this phenomenon lies beyond the scope of this work.

Similar to the standard $V$-function situation, there
are two more distinguished forms of the $W(E_7)$-transformations.
The second transformation is obtained after repeating \eqref{E7trafo1}
with $\delta$ playing the role of $\epsilon$,
$s_{3,4,5,6}$ playing the role of  $t_{1,2,3,4}$ and
$k_{3,4,5,6}$ playing the role of $n_{1,2,3,4}$. Also, one has to introduce
another discrete variable $\rho=0,1$, an analogue of $\delta$ in the first
transformation.
After symmetrization of
the resulting relation, we obtain the identity:
\begin{eqnarray}\label{E7trafo2} && 
V_\epsilon^{(r)}(t_a,n_a;p,q)
= \prod_{1\leq b,c\leq 4}\Gamma (t_bt_{c+4},n_b+n_{c+4}+\epsilon)\,
V_\rho^{(r)}\left(s_a,k_a;p,q\right),
\end{eqnarray}
where
$$
s_a= \frac{\sqrt{ t_1t_2t_3t_4 }}{t_a}, \; a=1,2,3,4,  \quad
s_a=  \frac{\sqrt{ t_5t_6t_7t_8 }}{t_a} , \; a=5,6,7,8,
$$
with $|t_a|, |s_a|<1$ and
$$
\left\{
\begin{array}{cl}
k_a= -n_a+\frac{1}{2}\left(\sum_{\ell=1}^4 n_\ell+\epsilon-\rho\right), & a=1,2,3,4,  \\
k_a= -n_a+\frac{1}{2}\left(\sum_{\ell=5}^8 n_\ell+\epsilon-\rho\right), & a=5,6,7,8.
\end{array}
\right.
$$
Here the value of $\rho$ is fixed from the condition that
$\sum_{\ell=1}^4 n_\ell+\epsilon-\rho$ is even.

The third transformation is obtained after equating the right-hand side
expressions in \eqref{E7trafo1} and \eqref{E7trafo2}:
\begin{eqnarray}\label{E7trafo3} && 
V_\epsilon^{(r)}(t_a,n_a;p,q)
= \prod_{1\leq b<c\leq 8}\Gamma (t_bt_c,n_b+n_c+\epsilon)\,
V_\epsilon^{(r)}\left(\frac{\sqrt{pq}}{t_a},-n_a-\epsilon;p,q\right).
\end{eqnarray}
Originally one finds in this case the transformation
$n_a\to -n_a-(\epsilon+\delta)/2$, which requires $\epsilon+\delta$ to
be even leading to $\delta=\epsilon$. Note that the form of this
third transformation is true for arbitrary values of $n_a$,
since the map $n_a\to -n_a-\epsilon$ is true for all $n_a$.

For $r=1$ all three relations become the standard symmetry
transformations for the $V$-function with the key
generating relation \eqref{E7trafo1} discovered in \cite{spi:theta2}.

Let us construct the rarefied analogue of the elliptic hypergeometric equation
derived in \cite{spi:thesis,spi:tmf2007}. For brevity we use the following trick --
until formula \eqref{reheq} the symbols $n_a$ and $m$ actually denote
$n_a+\epsilon/2$ and $m+\epsilon/2$. This is legitimate, since
the integrality of $n_a$ and $m$ is not essential in the computations.
However, to remind on the $\epsilon$-dependence we keep the notation $V_\epsilon^{(r)}$.

The addition formula for elliptic theta functions can be written in the form
\beq
t_3\theta(t_2t_3^{\pm1},t_1z^{\pm1};q^r)+t_1\theta(t_3t_1^{\pm1},t_2z^{\pm1};q^r)
+t_2\theta(t_1t_2^{\pm1},t_3z^{\pm1};q^r)=0.
\label{add}\eeq
It yields the following contiguous relation for the $V_\epsilon^{(r)}$-function
\baq \nonumber &&
\frac{t_1^{1+2n_1}q^{-n_1(n_1+2)} V_\epsilon^{(r)}(pt_1,n_1-1)}
{\theta(t_1t_2^{\pm1}q^{-n_1\mp n_2},t_1t_3^{\pm 1}q^{-n_1\mp n_3};q^r)}
+\frac{t_2^{1+2n_2}q^{-n_2(n_2+2)}V_\epsilon^{(r)}(pt_2,n_2-1)}
{\theta(t_2t_1^{\pm1}q^{-n_2\mp n_1},t_2t_3^{\pm 1}q^{-n_2\mp n_3};q^r)}
\\ && \makebox[6em]{}
+\frac{t_3^{1+2n_3}q^{-n_3(n_3+2)}V_\epsilon^{(r)}(pt_3,n_3-1)}
{\theta(t_3t_1^{\pm1}q^{-n_3\mp n_1},t_3t_2^{\pm 1}q^{-n_3\mp n_2};q^r)} = 0,
\label{cont-1}\eaq
where $V_\epsilon^{(r)}(pt_b,n_b-1)$ denotes the $V_\epsilon^{(r)}(t_a,n_a)$-function
with the parameters $t_b, n_b$ replaced by $pt_b, n_b-1$ (with the balancing condition
being $\prod_{a=1}^8t_a=pq^2,\, \sum_{a=1}^8n_a=1$).
Indeed, if we replace in \eqref{cont-1} $V_\epsilon^{(r)}$-functions by
their integrands, then we obtain the equality
\begin{eqnarray*} &&
\frac{t_1q^{-n_1}\theta(t_1z^{\pm1}q^{-n_1\mp m};q^r)}
{\theta(t_1t_2^{\pm1}q^{-n_1\mp n_2},t_1t_3^{\pm 1}q^{-n_1\mp n_3};q^r)}
+\frac{t_2q^{-n_2}\theta(t_2z^{\pm1}q^{-n_2\mp m};q^r)}
{\theta(t_2t_1^{\pm1}q^{-n_2\mp n_1},t_2t_3^{\pm 1}q^{-n_2\mp n_3};q^r)}
\\ && \makebox[4em]{}
+\frac{t_3q^{-n_3}\theta(t_3z^{\pm1}q^{-n_3\mp m};q^r)}
{\theta(t_3t_1^{\pm1}q^{-n_3\mp n_1},t_3t_2^{\pm 1}q^{-n_3\mp n_2};q^r)} = 0
\end{eqnarray*}
multiplied by the function
$$
z^{2m}q^{m^2}\prod_{a=1}^8\frac{\Gamma (t_az^{\pm1},n_a\pm m)}
{\Gamma (z^{\pm2},\pm 2m)}.
$$
Replacing $t_a\to t_aq^{n_a}$ and $z\to zq^{m}$
and simplifying the factors we obtain the addition formula \eqref{add}.
Integrating the
resulting equation for the integrand functions over $z\in\T$
and summing in $m$, we come to \eqref{cont-1}. Note that for $r=1$ one can
pull out all powers of $q$ out of the theta functions and find that
all three terms in \eqref{cont-1} get equal multipliers
$q^{\sum_{a=1}^3n_a^2}/\prod_{a=1}^3t_a^{2n_a}$, so that
the dependence of this contiguous relation on $n_a$ disappears completely.

Substituting relation \eqref{E7trafo3} in \eqref{cont-1}, we obtain
\baq \nonumber &&
\frac{p^{2n_1}q^{n_1(n_1+3)}}{t_1^{2+2n_1}}
\frac{\prod_{a=4}^8\theta\left(\frac{t_1t_a}{pq}q^{-n_1-n_a};q^r\right)}
{\theta(\frac{t_2}{t_1}q^{n_1-n_2},\frac{t_3}{t_1}q^{n_1-n_3};q^r)}V_\epsilon^{(r)}(p^{-1}t_1,n_1+1)
\\   \nonumber && \makebox[2em]{}
+\frac{p^{2n_2}q^{n_2(n_2+3)}}{t_2^{2+2n_2}}
\frac{\prod_{a=4}^8\theta\left(\frac{t_2t_a}{pq}q^{-n_2-n_a};q^r\right)}
{\theta(\frac{t_1}{t_2}q^{n_2-n_1},\frac{t_3}{t_2}q^{n_2-n_3};q^r)}V_\epsilon^{(r)}(p^{-1}t_2,n_2+1)
\\ && \makebox[4em]{}
+\frac{p^{2n_3}q^{n_3(n_3+3)}}{t_3^{2+2n_3}}
\frac{\prod_{a=4}^8\theta\left(\frac{t_3t_a}{pq}q^{-n_3-n_a};q^r\right)}
{\theta(\frac{t_2}{t_3}q^{n_3-n_2},\frac{t_1}{t_3}q^{n_3-n_1};q^r)}V_\epsilon^{(r)}(p^{-1}t_3,n_3+1)
=0,
\label{cont-3}\eaq
where $\prod_{a=1}^8t_a=p^3q^2$ and $\sum_{a=1}^8n_a=-1$.
Shifting $t_3\to pt_3$ and $n_3\to n_3-1$ in \eqref{cont-3}, we come to the equality
\baq \nonumber &&
\frac{p^{2n_1}q^{n_1(n_1+3)}}{t_1^{2+2n_1}}
\frac{\prod_{a=4}^8\theta\left(\frac{t_1t_a}{pq}q^{-n_1-n_a};q^r\right)}
{\theta(\frac{t_2}{t_1}q^{n_1-n_2},\frac{pqt_3}{t_1}q^{n_1-n_3};q^r)}
V_\epsilon^{(r)}(p^{-1}t_1,pt_3,n_1+1,n_3-1)
\\   \nonumber && \makebox[2em]{}
+\frac{p^{2n_2}q^{n_2(n_2+3)}}{t_2^{2+2n_2}}
\frac{\prod_{a=4}^8\theta\left(\frac{t_2t_a}{pq}q^{-n_2-n_a};q^r\right)}
{\theta(\frac{t_1}{t_2}q^{n_2-n_1},\frac{pqt_3}{t_2}q^{n_2-n_3};q^r)}
V_\epsilon^{(r)}(p^{-1}t_2,pt_3,n_2+1,n_3-1)
\\ && \makebox[4em]{}
+\frac{p^{2(n_3-1)}q^{(n_3-1)(n_3+2)}}{(pt_3)^{2n_3}}
\frac{\prod_{a=4}^8\theta\left(t_3t_aq^{-n_3-n_a};q^r\right)}
{\theta(\frac{t_2}{pqt_3}q^{n_3-n_1},\frac{t_1}{pqt_3}q^{n_3-n_1};q^r)}
V_\epsilon^{(r)}(t_a,n_a) =0.
 \nonumber\eaq
Replacing $t_1\to p^{-1}t_1, \, n_1\to n_1+1$ or
$t_2\to p^{-1}t_2, \, n_2\to n_2+1$ in \eqref{cont-1} we obtain the relations
\baq \nonumber && \makebox[0em]{}
\frac{(p^{-1}t_1)^{3+2n_1}q^{-(n_1+1)(n_1+3)} V_\epsilon^{(r)}(t_a,n_a)}
{\theta(p^{-1}t_1t_2^{\pm1}q^{-n_1-1\mp n_2},p^{-1}t_1t_3^{\pm 1}q^{-n_1-1\mp n_3};q^r)}
\\ && \makebox[4em]{}
+\frac{t_2^{1+2n_2}q^{-n_2(n_2+2)}V_\epsilon^{(r)}(p^{-1}t_1,pt_2,n_1+1,n_2-1)}
{\theta(t_2(p^{-1}t_1)^{\pm1}q^{-n_2\mp (n_1+1)},t_2t_3^{\pm 1}q^{-n_2\mp n_3};q^r)}
\nonumber \\ && \makebox[6em]{}
+\frac{t_3^{1+2n_3}q^{-n_3(n_3+2)}V_\epsilon^{(r)}(p^{-1}t_1,pt_3,n_1+1,n_3-1)}
{\theta(t_3(p^{-1}t_1)^{\pm1}q^{-n_3\mp (n_1+1)},t_3t_2^{\pm 1}q^{-n_3\mp n_2};q^r)} = 0
 \nonumber\eaq
or
\baq \nonumber && \makebox[0em]{}
\frac{t_1^{1+2n_1}q^{-n_1(n_1+2)} V_\epsilon^{(r)}(pt_1,p^{-1}t_2,n_1-1,n_2+1)}
{\theta(t_1(p^{-1}t_2)^{\pm1}q^{-n_1\mp (n_2+1)},t_1t_3^{\pm 1}q^{-n_1\mp n_3};q^r)}
\\ && \makebox[4em]{}
+\frac{(p^{-1}t_2)^{3+2n_2}q^{-(n_2+1)(n_2+3)}V_\epsilon^{(r)}(t_a,n_a)}
{\theta(p^{-1}t_2t_1^{\pm1}q^{-n_2-1\mp n_1},p^{-1}t_2t_3^{\pm 1}q^{-n_2-1\mp n_3};q^r)}
\nonumber \\ && \makebox[6em]{}
+\frac{t_3^{1+2n_3}q^{-n_3(n_3+2)}V_\epsilon^{(r)}(p^{-1}t_2,pt_3,n_2+1,n_3-1)}
{\theta(t_3t_1^{\pm1}q^{-n_3\mp n_1},t_3(p^{-1}t_2)^{\pm 1}q^{-n_3\mp (n_2+1)};q^r)}
 = 0.
 \nonumber\eaq
Eliminating from the latter three equalities the functions
$V_\epsilon^{(r)}(p^{-1}t_1,pt_3,n_1+1,n_3-1)$ and $V_\epsilon^{(r)}(p^{-1}t_2,pt_3,n_2+1,n_3-1)$,
we arrive at the final equation (we restore the $\epsilon$-dependence):
\begin{eqnarray}\nonumber
&& \makebox[0em]{}
\mathcal{A}\left(\textstyle{\frac{t_1}{q^{n_1+\epsilon/2}},\frac{t_2}{q^{n_2+\epsilon/2}},
\ldots,\frac{t_8}{q^{n_8+\epsilon/2}},p;q^r}\right)
\Big(U_\epsilon(pt_1,p^{-1}t_2,n_1-1,n_2+1)-U_\epsilon(t_a,n_a)\Big)
\\ \nonumber && \makebox[1em]{}
+\mathcal{A}\left(\textstyle{\frac{t_2}{q^{n_2+\epsilon/2}},\frac{t_1}{q^{n_1+\epsilon/2}},
\ldots,\frac{t_8}{q^{n_8+\epsilon/2}},p;q^r}\right)
\Big(U_\epsilon(p^{-1}t_1,pt_2,n_1+1,n_2-1)-U_\epsilon(t_a,n_a)\Big)
\\ && \makebox[4em]{}
+ U_\epsilon(t_a,n_a)=0,
\label{reheq}\end{eqnarray}
where we have denoted
\begin{equation}
 \mathcal{A}(t_1,\ldots, t_8,p;q^r):=\frac{\theta\left(\frac{t_1}{pq^{1-r}t_3},
t_3t_1,\frac{t_3}{t_1};q^r\right)}
                 {\theta\left(\frac{t_1}{t_2},
\frac{t_2}{pq^{1-r}t_1},\frac{t_1t_2}{pq^{1-r}};q^r\right)}
\prod_{a=4}^8\frac{\theta\left(\frac{t_2t_a}{pq^{1-r}};q^r\right)}
{\theta\left(t_3t_a;q^r\right)}
\label{A-coeff}\end{equation}
and
$$
U_\epsilon(t_a,n_a):=\frac{V_\epsilon^{(r)}(t_a,n_a)}
{\prod_{k=1}^2\Gamma (t_kt_3,n_k+n_3+\epsilon)\Gamma (t_kt_3^{-1},n_k- n_3)}.
$$
A fundamental fact is that for any $r$ the function $\mathcal{A}(t_1,\ldots, t_8,p;q^r)$
is a $q^r$-elliptic function of all parameters $t_1,\ldots,t_8$ (one of which should be
counted as a dependent variable through the balancing condition $\prod_{a=1}^8t_a=(pq)^2$),
i.e. it does not change after the scaling $t_a\to t_aq^r, t_b\to t_bq^{-r}$ for any $a\neq b$.

We call equation \eqref{reheq} the {\em rarefied elliptic hypergeometric equation},
though it does not have the form one would have liked to see.
It can be checked that under the shifts $n_a\to n_a+r,\, n_b\to n_b-r$, $a\neq b$
the functions $U_\epsilon(p^{\pm1}t_1,p^{\mp1}t_2,n_1\mp1,n_2\pm 1)$ and $U_\epsilon(t_a,n_a)$ have the same
quasiperiodicity multipliers.
Therefore, by shifting $t_{1,2}\to p^{\pm l}t_{1,2}$, $n_{1,2}\to n_{1,2} \mp l$, $l=1,2,\ldots,$
in equation \eqref{reheq}, combining the resulting equation in an appropriate way and
using the fact that
$$
U_\epsilon(t_a,n_1- r,n_2+ r,n_3,\ldots) =U_\epsilon(t_a,n_1+ r,n_2- r,n_3,\ldots) = U_\epsilon(t_a,n_a),
$$
one can derive the following tridiagonal equation
\begin{equation}
\alpha(t_a,n_a)U_\epsilon(p^rt_1,p^{-r}t_2,n_a)+\beta(t_a,n_a)U_\epsilon(t_a,n_a)+\gamma(t_a,n_a)U_\epsilon(p^{-r}t_1,p^{r}t_2,n_a)=0,
\label{true-reheq}\end{equation}
for some coefficients $\alpha,\beta,\gamma$. After parametrization $t_1=cx, t_2=cx^{-1}$
the latter equation becomes a $``q"$-difference equation of the second order for the
variable $x$ with $``q"=p^r$. It is appropriate
to call equation  \eqref{true-reheq} the rarefied elliptic hypergeometric equation, however,
we were not able to derive a compact form of its coefficients.
Note that we know already one of its solutions given by the $V_\epsilon^{(r)}$-function.
Its second linearly independent solution is obtained by application
of the symmetry transformation of the equation which is not a symmetry of the
solution, e.g. by multiplying its parameters by the powers of $q^r$ or some other means.
The second order finite-difference equation \eqref{true-reheq}
represents currently the most complicated known equation of such type with the closed form
solutions (an ``exactly solvable" equation).

Similar to the standard $r=1$ case, equation \eqref{reheq}
has a partner obtained by permuting the bases $p$ and $q$:
\begin{eqnarray}\nonumber
&& \makebox[-1em]{}
\mathcal{A}\left(t_1p^{n_1+\epsilon/2},t_2p^{n_2+\epsilon/2},\ldots,
t_8p^{n_8+\epsilon/2},q;p^r\right)
\Big(U_\epsilon(qt_1,q^{-1}t_2,n_1+1,n_2-1)-U_\epsilon(t_a,n_a)\Big)
\\ \nonumber && \makebox[-1em]{}
+\mathcal{A}\left(\textstyle{t_2p^{n_2+\epsilon/2},t_1p^{n_1+\epsilon/2},\ldots,
t_8p^{n_8+\epsilon/2},q;p^r}\right)
\Big(U_\epsilon(q^{-1}t_1,qt_2,n_1-1,n_2+1)-U_\epsilon(t_a,n_a)\Big)
\\ && \makebox[4em]{}
 + U_\epsilon(t_a,n_a)=0.
\label{reheq_par}\end{eqnarray}

Let us set $\epsilon=0$ and denote
$$
t_1:=cx, \quad t_2:=\frac{c}{x},\quad \text{or} \quad c=\sqrt{t_1t_2},
\quad x=\sqrt\frac{t_1}{t_2}
$$
and
$$
n_1:=n_c+n,\quad n_2:= n_c-n, \quad \text{or}\quad
n_c= \frac{n_1+n_2}{2}, \quad n= \frac{n_1-n_2}{2}.
$$
Now we introduce new continuous and discrete variables
$$
s_1:=\frac{c}{t_3pq^{1-r}},
\quad s_2:=\frac{c}{t_3},\quad s_3:=ct_3q^{4r}, \quad
s_a:=\frac{pq^{1-r}}{ct_a},\; a=4,\ldots,8,
$$
$$
k_1=k_2:=n_c-n_3, \quad k_3:=n_c+n_3, \quad k_a:=-n_c-n_a,\; a=4,\ldots,8.
$$
To keep integrality of the numbers $k_a$ we shall assume that all $n_a$
are either odd or even.
It is easy to see  that the balancing condition remains intact
$$
\prod_{a=1}^8s_a=p^2q^2, \qquad \sum_{a=1}^8 k_a=0.
$$
Replacing $U_\epsilon(t_a,n_a)$ by unknown function
$f(x,n)$, we obtain another form of the rarefied elliptic hypergeometric equation:
\baq \nonumber &&
 A(xq^{-n})\left( f(px,n-1)-f(x,n)\right)
 \\ && \makebox[2em]{}
+ A(x^{-1}q^n)\left( f(p^{-1}x,n+1)-f(x,n)\right) + \nu f(x,n)=0,
\label{reheq_sym}\eaq
where
\beq
A(x)=\frac{\prod_{a=1}^8 \theta\left(\frac{s_ax}{q^{k_a}};q^r\right) }
{\theta\left(x^2,pq^{1-r}x^2;q^r\right)},
\qquad
\nu=\prod_{a=3}^8\theta\left(\frac{s_1s_a}{q^{k_1+k_a}};q^r\right).
\label{pot}\eeq

\section{Extension of the multiple integral symmetry transformations}

We now consider rarefied analogues of the symmetry transformations for multiple
elliptic hypergeometric integrals derived by Rains in \cite{rai:trans}. Define the general
type I rarefied elliptic hypergeometric function on the root system $C_d$:
\begin{eqnarray}\nonumber &&
I_{C_d,\epsilon}^{(m)}(t_a,n_a):=\kappa^{(r)}_d
\sum_{m_1,\ldots,m_d=0}^{r-1}\int_{\T^d}
\prod_{1\leq j<k\leq d}
\frac{1}{\Gamma(z_j^{\pm 1}z_k^{\pm 1},\pm (m_j+\epsilon/2)\pm (m_k+\epsilon/2)))}
\\ && \makebox[4em]{} \times
\prod_{j=1}^n\frac{\prod_{a=1}^{2d+2m+4}
\Gamma(t_a z_j,n_a+ m_j+\epsilon)
\Gamma(t_a z_j^{- 1},n_a-m_j)}
{\Gamma(z_j^{\pm 2},\pm (2m_j+\epsilon))}\frac{dz_j}{z_j},
\label{rarI}\end{eqnarray}
where $|t_a|<1$ and the balancing condition has the form
\beq
\prod_{a=1}^{2d+2m+4}t_a=(pq)^{m+1},\qquad
\sum_{a=1}^{2d+2m+4}n_a+(d+m+2)\epsilon=0.
\label{balgenI}\eeq

\begin{conjecture}
Suppose that all $t_a\in\mathbb{C}^*$ and $n_a\in\mathbb{Z}$, $
a=1,\ldots,2d+2m+4$, $\epsilon=0,1$ satisfy the constraints
$\sqrt{|pq|}<|t_a|<1$ and the balancing condition \eqref{balgenI}. Then
\begin{eqnarray}  &&  \makebox[0em]{}
I_{C_d,\epsilon}^{(m)}(t_a,n_a)=
\prod_{1\leq a<b\leq 2d+2m+4}\Gamma (t_at_b,n_a+n_b+\epsilon)\;
I_{C_m,\epsilon}^{(n)}\left(\frac{\sqrt{pq}}{t_a},-n_a-\epsilon\right).
\label{E7trafoCdI} \end{eqnarray}
\end{conjecture}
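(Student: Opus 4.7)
The plan is to adapt to the rarefied setting Rains' proof of the $W(E_7)$-type symmetry for the standard elliptic type I $C_n$ integral \cite{rai:trans}, using Theorem 5 (the rarefied type I $C_n$ evaluation \eqref{rfintCI}) as the principal computational engine. The base case $m=0$ of \eqref{E7trafoCdI} is precisely Theorem 5, and the $d=m=1$ case recovers the $W(E_7)$-symmetry \eqref{E7trafo3} of the rarefied elliptic hypergeometric function, so the inductive structure is already present. My preferred route is the ``doubling plus Fubini'' device employed in Section 7 for the type II $C_d$ integral, generalized from one auxiliary $C_{d-1}$ integration to an auxiliary $C_m$ integration.

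Concretely, introduce the $(d+m)$-fold sum-integral
\begin{equation*}
J_\epsilon := \kappa^{(r)}_d \, \kappa^{(r)}_m \sum_{m_1,\ldots,m_d = 0}^{r-1} \sum_{l_1,\ldots,l_m=0}^{r-1} \int_{\mathbb{T}^{d+m}} K(z_j, m_j; w_k, l_k; t_a, n_a) \prod_{j=1}^d \frac{dz_j}{z_j} \prod_{k=1}^m \frac{dw_k}{w_k},
\end{equation*}
whose kernel $K$ carries a type I $C_d$ structure in the variables $(z_j, m_j)$ and simultaneously a type I $C_m$ structure in the variables $(w_k, l_k)$, coupled by a factor of the form
\begin{equation*}
\prod_{j=1}^{d}\prod_{k=1}^{m} \Gamma^{(r)}\!\left(u \, z_j^{\pm 1} w_k^{\pm 1}, \, \nu + \tfrac{\epsilon+\delta}{2} \pm (m_j + \tfrac{\epsilon}{2}) \pm (l_k + \tfrac{\delta}{2})\right).
\end{equation*}
The coupling parameters $u$, $\nu$, and the auxiliary discrete variable $\delta \in \{0,1\}$ are tuned (with $u = \sqrt{pq}$, in the spirit of Rains' argument) so that, with the $w$'s held fixed, the $2m$ factors $\Gamma^{(r)}(u w_k z_j^{\pm 1}, \ldots)$ act as additional parameter pairs for each $z_j$, bringing the total per $z_j$ to $2d + 2m + 4$, and exactly matching the hypothesis of Theorem 5; and symmetrically on the $w$-side.

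Apply Fubini's theorem, justified because the integrand is bounded on $\mathbb{T}^{d+m}$ under $\sqrt{|pq|} < |t_a| < 1$, and evaluate $J_\epsilon$ in two ways. Performing the $(z_j, m_j)$ summation-integration first via Theorem 5 produces a product of rarefied gamma functions times a $C_m$ sum-integral whose effective continuous parameters and discrete indices, after simplification using the inversion relation \eqref{invGamma}, are exactly those of $I^{(n)}_{C_m, \epsilon}(\sqrt{pq}/t_a, -n_a - \epsilon)$; this reproduces the right-hand side of \eqref{E7trafoCdI}. Performing the $(w_k, l_k)$ summation-integration first gives, by the parallel calculation, a gamma product times $I^{(m)}_{C_d, \epsilon}(t_a, n_a)$, the left-hand side. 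Equating the two evaluations and canceling the overlapping gamma factors yields \eqref{E7trafoCdI}.

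The hard part will be the combinatorial bookkeeping of discrete variables. One must exhibit a choice of $u$, $\nu$, and $\delta$ such that the continuous and discrete balancing conditions \eqref{balgenI} hold for each partial application of Theorem 5 on both sides simultaneously, and such that the induced shifts assemble into precisely the transformation $n_a \mapsto -n_a - \epsilon$ required on the right---here $\delta$ plays a role analogous to the one in the proof of Theorem 6, absorbing the parity mismatch between $\sum n_a$ and $\sum k_a$. A secondary subtlety is verifying the modulus constraints of Theorem 5 for all intermediate integrals: the coupling $u = \sqrt{pq}$ produces effective parameters on the circle $|z| = \sqrt{|pq|}$, so the reciprocals $\sqrt{pq}/t_a$ appearing on the right-hand side of \eqref{E7trafoCdI} must still lie in the unit disk, which is exactly what the lower bound $\sqrt{|pq|} < |t_a|$ in the hypothesis guarantees. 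Once these issues are resolved, the remaining algebra reduces to identities among rarefied gamma functions that follow from \eqref{invGamma}, the quasiperiodicity \eqref{quasi1}, and the periodicity of the sum-integral kernels.
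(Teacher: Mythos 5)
First, a point of order: the paper does not prove this statement at all --- it is stated as Conjecture~1, with the author only remarking that a proof might come from adapting the considerations of \cite{KY} (who treated $\epsilon=0$ in a different normalization) or from generalizing the determinant method of \cite{RS}, and that for $r=1$ it is Rains' Theorem~3.1 of \cite{rai:trans}, whose proof is of an entirely different nature (difference operators and analytic continuation, not an interchange of integrations). So there is no ``paper's proof'' for your argument to match; the question is whether your proposed proof stands on its own, and it does not.

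The central device fails on counting grounds. If you take the coupling constant $u=\sqrt{pq}$, the coupling collapses: already at $r=1$ one has $\Gamma(\sqrt{pq}\,z_jw_k;p,q)\,\Gamma(\sqrt{pq}\,z_j^{-1}w_k^{-1};p,q)=1$ by \eqref{inv1}, and in the rarefied case the corresponding products degenerate by \eqref{invGamma} to elementary theta/power factors; in particular the coupling cannot supply the $2m$ additional gamma-function parameters per $z_j$ that you need in order to invoke the evaluation \eqref{rfintCI}. If instead you keep $u$ generic so that the coupling is nontrivial, then applying \eqref{rfintCI} to the $(z_j,m_j)$ variables produces, among the cross terms of the evaluation, the factors $\prod_{1\leq k<k'\leq m}\Gamma^{(r)}(u^2w_k^{\pm1}w_{k'}^{\pm1},\cdot)$, which turn the remaining $w$-integral into a \emph{type~II} object with $t=u^2$ --- this is exactly the mechanism by which \eqref{compintII} proves Theorem~6, and it is incompatible with obtaining the pure type~I integrand of $I^{(d)}_{C_m}$. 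Moreover, for \eqref{rfintCI} to apply to the $z$-variables at all, only $2d+4-2m$ of the parameters $t_a$ can be attached to $z$, so the evaluation can only generate couplings $\Gamma^{(r)}(ut_aw_k^{\pm1},\cdot)$ for that subset of $a$'s; the right-hand side of \eqref{E7trafoCdI}, which requires all $2d+2m+4$ reflected parameters $\sqrt{pq}/t_a$ to appear in the $C_m$ sum-integral, can never arise from a single interchange. This is also visible in the simplest case $d=m=1$: the Fubini argument of Section~8 yields only the reflection \eqref{E7trafo1} (with a genuinely free coupling parameter $f$), and the inversion \eqref{E7trafo3} --- the $d=m=1$ instance of the conjecture --- is obtained only by composing two such reflections; your proposal contains no analogue of that composition, and for $d\neq m$ the intermediate steps would connect integrals of different dimensions and types, which a single doubling-plus-Fubini step cannot do. A genuine proof would have to follow one of the routes the paper actually points to (the method of \cite{KY}, a generalization of \cite{RS}, or an adaptation of Rains' original argument), not the interchange trick.
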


For $r=1$ this is the Rains'  $C_d\leftrightarrow C_m$ transformation
for type I elliptic hypergeometric integrals (see Theorem 3.1 in \cite{rai:trans}).
For $m=0$ one gets evaluation of the type I integral \eqref{rfintCI}. Equivalently,
it can be obtained from the general formula \eqref{E7trafoCdI}
after fixing $n_{2d+4+a}+n_{2d+4+a+m}+\epsilon=0$ and taking the limit $t_{2d+4+a}t_{2d+m+4+a}\to pq$
for $a=1,\ldots,m.$ In this case the parameters $t_{2d+5},\ldots,t_{2d+2m+4}$
and $n_{2d+5},\ldots, n_{2d+2m+4}$ simply drop out from the
expression on the left-hand side, and on the right-hand side a number of
poles pinch the integration contours reducing sums of integrals to the right-hand
side expression in \eqref{rfintCI}. For $n=m=1$ we obtain the third
$V_\epsilon^{(r)}$-function transformation \eqref{E7trafo3}. For arbitrary $r$
and $\epsilon=0$ this transformation was recently established in \cite{KY}
in terms of the gamma function \eqref{pergamma}
(the $\text{mod}\; r$ relation in discrete balancing condition of  \cite{KY}
disappears after passing to our normalization of the rarefied elliptic gamma function).

Define now the type II $C_d$-root system analogue of the rarefied elliptic
hypergeometric function \eqref{rfV}. Take two bases $p,q\in\mathbb{C},\, |p|, |q|<1$,
and 19 continuous and discrete parameters $t,t_a \in\mathbb{C}^*$
and $n,n_a\in\mathbb{Z}$ $(a=1,\ldots,8)$, $\epsilon=0,1,$
and impose the balancing condition
\beq
t^{2d-2}\prod_{a=1}^8 t_a=(pq)^2, \qquad 2n(d-1)+\sum_{a=1}^8n_a+4\epsilon=0.
\label{bal_bcd}\eeq
The type II $C_d$-extension of the $V$-function has the form
\baq
V_\epsilon^{(r)}(t,t_a,n,n_a):=\kappa^{(r)}_d
\sum_{m_1,\ldots,m_r=0}^{r-1}\int_{\T^d} \Delta_\epsilon^{(r)} (z_k,m_k;t,t_a,n,n_a)\prod_{j=1}^d\frac{dz_j}{z_j},
\label{rfVCd}\eaq
where
\baq\nonumber &&
\Delta_\epsilon^{(r)}(z_k,m_k;t,t_a,n,n_a)=\prod_{1\leq j<k\leq d}
\frac{\Gamma (tz_j^{\pm 1} z_k^{\pm 1},n\pm (m_j+\frac{\epsilon}{2})
\pm (m_k+\frac{\epsilon}{2}))}
{\Gamma (z_j^{\pm 1} z_k^{\pm 1},\pm (m_j+\frac{\epsilon}{2})
\pm (m_k+\frac{\epsilon}{2}))}
\\ && \makebox[4em]{} \times
\prod_{j=1}^d\frac{\prod_{a=1}^8\Gamma (t_az_j,n_a+\epsilon+ m_j)
\Gamma (t_az_j^{- 1},n_a-m_j)}
{\Gamma (z_j^{\pm2},\pm (2m_j+\epsilon))}
\label{weightVd}\eaq
with $|t|, |t_a|<1$. For $r=1$ this is the function introduced by Rains
in \cite{rai:trans} (see the function $\mathord{I\!I}^{(n)}_{\lambda,\mu}$ on p. 224
for $\lambda=\mu=0$).

\begin{conjecture}
The following identity should hold true
\begin{eqnarray}\nonumber  &&  \makebox[-3em]{}
V_\epsilon^{(r)}(t,t_a,n,n_a)=
\prod_{1\leq a<b\leq 4}\prod_{l=0}^{d-1}\Gamma (t^lt_at_b,ln+n_a+n_b+\epsilon)
\\ && \makebox[-2em]{} \times
\Gamma (t^lt_{a+4}t_{b+4},ln+n_{a+4}+n_{b+4}+\epsilon)
V_\delta^{(r)}(t,s_a,k_a),
\label{E7trafoCd} \end{eqnarray}
where
$$
s_a =ft_a, \; a=1,2,3,4,  \quad
s_a = f^{-1} t_a, \;  a=5,6,7,8, \quad
f=\sqrt{\frac{pqt^{1-d}}{t_1t_2t_3t_4}},
$$
$$
\left\{
\begin{array}{cl}
k_a= n_a-\frac{1}{2}(\sum_{b=1}^4n_b+(d-1)n+\epsilon+\delta), &a=1,2,3, 4,  \\
k_a= n_a-\frac{1}{2}(\sum_{b=5}^8n_b+(d-1)n+\epsilon+\delta), &  a=5,6,7, 8,
\end{array}
\right.
$$
and $|t|, |t_a|, |s_a|<1$ together with the condition
that $\delta$ is fixed from the demand that
$\sum_{b=1}^4n_b+(d-1)n+\epsilon+\delta$ is even.
\end{conjecture}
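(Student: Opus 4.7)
The plan is to adapt the doubling strategy used to prove Theorem 6 on the evaluation of \eqref{rfintCII}, but now applied so as to produce a transformation identity instead of a closed-form evaluation. Specifically, I would construct an auxiliary $2d$-dimensional sum-integral $J$ coupling two families of variables $(z_j,m_j)_{j=1}^d$ and $(w_k,l_k)_{k=1}^d$ through cross-terms $\Gamma^{(r)}(t^{1/2}z_j^{\pm 1}w_k^{\pm 1},\ldots)$, together with the standard $C_d$-denominators $\prod_{j<k}\Gamma^{(r)}(z_j^{\pm 1}z_k^{\pm 1},\ldots)^{-1}$ and $\prod_{j<k}\Gamma^{(r)}(w_j^{\pm 1}w_k^{\pm 1},\ldots)^{-1}$, and ``boundary'' factors $\prod_j \Gamma^{(r)}(t_a z_j^{\pm 1},\ldots)/\Gamma^{(r)}(z_j^{\pm 2},\ldots)$ for $a=1,2,3,4$ attached to the $z$-family and $\prod_k \Gamma^{(r)}(\tilde t_a w_k^{\pm 1},\ldots)/\Gamma^{(r)}(w_k^{\pm 2},\ldots)$ for $a=5,6,7,8$ attached to the $w$-family. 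Here $\tilde t_a$ are chosen so that the continuous balancing of the type I $C_d$ identity \eqref{rfintCI} is met for each of the two possible integration orders.

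Evaluating $J$ by Fubini's theorem is the crux. Summing and integrating over $(w_k,l_k)$ first applies the type I evaluation \eqref{rfintCI} to the $w$-variables, treating the $z_j$'s as parameters; the result is proportional to a type II rarefied $V_\epsilon^{(r)}$-integrand on the $(z_j,m_j)$ side, multiplied by a product of rarefied elliptic gamma functions. After the remaining summation-integration over the $z$-family, this reproduces the left-hand side of \eqref{E7trafoCd} times the product $\prod_{1\leq a<b\leq 4}\prod_{l=0}^{d-1}\Gamma^{(r)}(t^l t_a t_b,\ldots)$ that arises from the $C_d$ type I evaluation. Integrating over $(z_j,m_j)$ first, again via \eqref{rfintCI}, produces the same auxiliary object with the roles of the two families swapped, giving the companion product $\prod_{1\leq a<b\leq 4}\prod_{l=0}^{d-1}\Gamma^{(r)}(t^l t_{a+4}t_{b+4},\ldots)$ and a $V_\delta^{(r)}(t,s_a,k_a)$ factor whose continuous parameters $s_a=f^{\pm 1}t_a$ and discrete parameters $k_a$ are forced by the balancing of the $z$-integration, with $f=\sqrt{pq\,t^{1-d}/(t_1t_2t_3t_4)}$ appearing automatically. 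Equating the two evaluations of $J$ and dividing by common factors then yields \eqref{E7trafoCd}.

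I expect three main obstacles. The first is the combinatorial bookkeeping of prefactors: each application of \eqref{rfintCI} generates a product of $\Gamma^{(r)}$ factors that must reorganize exactly into the double products $\prod_{a<b}\prod_{l=0}^{d-1}\Gamma^{(r)}(t^l t_a t_b,\ldots)$ on the correct side. This is where the tower of parameters $t^l t_a t_b$ must emerge from a single $C_d$ evaluation whose immediate outputs are only $\Gamma^{(r)}(t_a t_b,\ldots)$-type terms; this redistribution looks nontrivial and may well require a preliminary induction on $d$ using the recurrence of Theorem~6, rather than a single Fubini argument. The second obstacle is the discrete balancing: the two applications of \eqref{rfintCI} each impose a parity condition on the sum of their discrete variables, and one must verify that these are simultaneously solvable and precisely recover the rule that $\delta$ is fixed by the parity of $\sum_{b=1}^4 n_b+(d-1)n+\epsilon$, mirroring the phenomenon already observed in the $d=1$ case \eqref{ntok}. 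Third, the convergence domain $|t_a|,|s_a|<1$ is simultaneously attainable only in a subregion, so the full statement will follow by meromorphic continuation in the parameters, provided one checks that both sides define meromorphic functions of $(t,t_a,p,q)$ throughout the claimed range.
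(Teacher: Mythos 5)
There is an important mismatch of expectations here: the paper does not prove this statement at all. It is stated as Conjecture 2, and the author explicitly remarks that the only currently available route would be an appropriate generalization of Rains' ``rather complicated'' $r=1$ proof \cite{rai:trans}, which is not attempted in the paper. So your sketch has to stand on its own, and unfortunately its central step fails for a structural reason. In your auxiliary $2d$-fold sum-integral the single coupling entering the cross-terms $\Gamma^{(r)}(t^{1/2}z_j^{\pm1}w_k^{\pm1},\cdot)$ must play two roles at once: after performing the $(w_k,l_k)$ summation-integration via \eqref{rfintCI} (with parameters $t^{1/2}z_j^{\pm1}$ and $\tilde t_5,\dots,\tilde t_8$), the evaluated product contains $\prod_{j<k}\Gamma^{(r)}(t\,z_j^{\pm1}z_k^{\pm1},\cdot)$, so the same $t$ becomes the type II coupling of the resulting $C_d$ kernel in $z$; simultaneously the factors $\prod_{j,a}\Gamma^{(r)}(t^{1/2}\tilde t_a z_j^{\pm1},\cdot)$ force the reflection parameter relating the two parameter sets to be $t^{1/2}$. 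Hence the identity you would obtain is \eqref{E7trafoCd} only on the locus $f=\sqrt{t}$, i.e. $t^{d}t_1t_2t_3t_4=pq$. This is also visible by parameter counting: your construction has nine continuous inputs ($t^{1/2}$, $t_{1,\dots,4}$, $\tilde t_{5,\dots,8}$) subject to the two balancing conditions required by the two applications of \eqref{rfintCI}, hence seven free parameters, whereas the conjecture has eight ($t,t_1,\dots,t_8$ modulo one balancing relation). Since the missing direction is an algebraic constraint of positive codimension, no analytic continuation in the remaining parameters (your third point) can recover the general statement. For $d=1$ the two roles of the coupling coincide because the univariate $V$-function has no internal coupling, which is exactly why the doubling/Fubini trick proves \eqref{trafo} and \eqref{E7trafo1} but does not extend to $d\geq 2$.

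Your first ``obstacle'' is likewise not mere bookkeeping. A single application of \eqref{rfintCI} on either side produces only single products $\Gamma^{(r)}(t_at_b,\cdot)$, $\Gamma^{(r)}(\tilde t_a\tilde t_b,\cdot)$ and a factor $\Gamma^{(r)}(t,\cdot)^{d}$, never the towers $\prod_{l=0}^{d-1}\Gamma^{(r)}(t^{l}t_at_b,\cdot)$ of \eqref{E7trafoCd}. In the proof of the type II evaluation \eqref{rfintCII} those towers arise from a genuine rank-lowering recursion: the two coupled families there have sizes $d$ and $d-1$ (see \eqref{compintII}), so each Fubini step shifts $t_a\to t^{1/2}t_a$ and reduces $d$ by one, and the tower is accumulated over the induction. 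For a transformation you need equal-size families and the same rank on both sides, and then no such recursion is generated, so the proposed ``preliminary induction on $d$ using the recurrence of Theorem~6'' is not available in the form needed. In short, the approach as described can at best yield a restricted (and differently normalized) identity on a subvariety of parameters; a proof of the full conjecture is expected to require the difference-operator or interpolation-function machinery of \cite{rai:trans} adapted to the rarefied setting, as the paper itself indicates.
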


For $d=1$ this is the relation \eqref{E7trafo1} proven above.
For $r=1$, $d>1$ this is Rains' identity \cite{rai:trans}
(see Theorem 9.7 for $\lambda=\mu=0$).
In the limit $t_7t_8\to pq$ and $n_7+n_8+\epsilon=0$ the left-hand side integral reduces to
the integral in \eqref{rfintCII}, whereas the right-hand side
integral should collapse to the required product of gamma functions due to
pinching of the integration contours.

The general rarefied elliptic hypergeometric function of type I for
the root system $A_n$ has the form
\begin{eqnarray} \label{rarIAn} &&
I_{A_n,\epsilon}^{(m)}(t_a,n_a;s_a,k_a):=
\frac{(p^r;p^r)_\infty^n (q^r;q^r)_\infty^n}{(n+1)!(2\pi \textup{i})^n }
\\ && \makebox[-1em]{} \times
\sum_{0\leq m_1,\ldots,m_n\leq r-1 \atop m_1+\cdots+m_{n+1}=\epsilon
}\int_{\T^n}\frac{
\prod_{j=1}^{n+1}\prod_{a=1}^{n+m+2}
\Gamma(t_a z_j,n_a+ m_j)\Gamma(s_a z_j^{-1},k_a-m_j)
}{\prod_{1\leq i<j\leq n+1}\Gamma(z_iz_j^{-1},m_i-m_j)\Gamma(z_i^{-1}z_j,-m_i+m_j)}
\prod_{j=1}^n\frac{dz_j}{z_j},
\nonumber\end{eqnarray}
where $\prod_{j=1}^{n+1}z_j=1$, $|t_a|, |s_a|<1$ and the balancing condition has the form
\beq
TS=(pq)^{m+1},\quad T:=\prod_{a=1}^{n+m+2}t_a,\quad
S:=\prod_{a=1}^{n+m+2}s_a,\quad
\sum_{a=1}^{n+m+2}(n_a+k_a)=0.
\label{balgenIAn}\eeq
Suppose that $\epsilon=l(n+1)$ for some integer $l$. Then $\epsilon$ can be
removed by simple shifts $m_a\to m_a+l$. Therefore the natural values
of the parameter $\epsilon$, when it cannot be removed in this way, are
$\epsilon=0,1,\ldots, n.$ The function \eqref{rarIAn} has been considered
recently in \cite{KY} in terms of the gamma function \eqref{pergamma}.

Suppose that all $t_a,\, s_a\in\mathbb{C}^*$ and $n_a\,\, k_a \in\mathbb{Z}$,
$a=1,\ldots,n+m+2$, $\epsilon=0,1,\ldots, n$ satisfy the constraints
$|t_a|, |T^{\frac{1}{m+1}}t_a^{-1}|, |s_a|, |S^{\frac{1}{m+1}}s_a^{-1}|<1$
and the balancing condition \eqref{balgenIAn}. Then one has the following symmetry transformation
\begin{eqnarray} \label{E7trafoAnI}  &&  \makebox[0em]{}
I_{A_n,\epsilon}^{(m)}(t_a,n_a;s_a,k_a) =
\prod_{1\leq a<b\leq n+m+2}\Gamma (t_as_b,n_a+k_b)\;
\\ && \makebox[2em]{} \times
I_{A_m,\delta}^{(n)}\left(\frac{T^{\frac{1}{m+1}}}{t_a},
N-n_a; \frac{S^{\frac{1}{m+1}}}{s_a},-N-k_a\right),
\quad N:=\frac{\sum_{b=1}^{n+m+2} n_b+\epsilon-\delta}{m+1},
\nonumber\end{eqnarray}
where the value of $\delta=0,1,\ldots, m$ is fixed from the condition that
$\sum_{b=1}^{n+m+2} n_b+\epsilon-\delta$ is divisible by $m+1$.

For $n=m=1$ this identity coincides with the second $V_\epsilon^{(r)}$-function
transformation \eqref{E7trafo2}, provided one substitutes $s_a:=t_{a+4}$ and
$k_a:=n_{a+4}-\epsilon$, $a=1,\ldots,4$.  For $r=1$, $n>1,\, m>0$ this is Rains'
$A_n\leftrightarrow A_m$ transformation (see Theorem 4.1 in \cite{rai:trans}).
A symmetry transformation for $I_{A_n,\epsilon}^{(m)}$-function
was suggested in \cite{KY}, but the original proposition contained a mistake,
which was corrected after the author proposed \eqref{E7trafoAnI}.
The final transformation given in \cite{KY} is equivalent
to \eqref{E7trafoAnI} and has a simpler form.

As to possible proof of the above Conjecture 1, the considerations of
\cite{KY} should be applicable to it as well. However, a substantially
more elegant approach would consist in the appropriate generalization
of the method suggested in \cite{RS}. The $m=0$ case of \eqref{E7trafoAnI}
should be easily provable by direct extension of the method of \cite{spi:short}
used there for the type I $A_n$ elliptic beta integral evaluation.
The only available at the moment possible approach to multivariate type II
$V_\epsilon^{(r)}$-function transformation consists in an appropriate
generalization of the rather complicated proof of the $r=1$ case
suggested by Rains \cite{rai:trans}. We do not dwell into these
considerations in the present work.

Next we consider possible applications of the type II $C_n$ rarefied
elliptic hypergeometric function. For simplicity we limit
ourself to the case $\epsilon=0$.
Similar to the situation investigated in \cite{spi:thesis,spi:tmf2007},
we consider the space of sequences of holomorphic functions of
$z_j\in\mathbb{C}^*$,  which are $r$-periodic in the
discrete variables,
$\varphi(z_j,\ldots, m_k,m_k+r,m_{k+1},\ldots)=\varphi(z_j,m_j)$,
and define the inner product for it
$$
\langle \varphi, \psi \rangle=\kappa_{d}^{(r)}
\sum_{m_1,\ldots,m_d=0}^{r-1}\int_{{\mathbb T}^d}\Delta_0^{(r)}(z_k,m_k;t,t_a,n,n_a)
\varphi(z_j,m_j)\psi(z_j,m_j)\,\prod_{k=1}^d\frac{dz_k}{z_k}
$$
with the weight function \eqref{weightVd}.
Let us introduce the finite-difference operator
\begin{eqnarray}\nonumber
&& {\mathcal D}=\sum_{j=1}^d\Big(A_j(z_kq^{-m_k})(T_{p,j}S_j^{-1}-1)
+A_j(z_k^{-1}q^{m_k})(T_{p,j}^{-1}S_j-1)\Big),
\\&&
 A_j(z_k)=\frac{\prod_{a=1}^8\theta(t_aq^{-n_a}z_j;q^r)}{\theta(z_j^2,pq^{1-r}z_j^2;q^r)}
\prod_{\substack{\ell=1 \\ \ell\ne j}}^d
\frac{\theta(tq^{-n}z_jz_\ell^{\pm1};q^r)}{\theta(z_jz_\ell^{\pm 1};q^r)},
\label{rvD}\end{eqnarray}
where $T_{p,j}^{n}S_j^{m}f(z_k,m_k)=f(\ldots,p^{n}z_j,\ldots,m_j+m,\ldots)$
and we assume validity of the balancing restriction \eqref{bal_bcd}.
For $r=1$ this is the Hamiltonian of the van Diejen completely integrable
model \cite{die:integrability} under the additional balancing condition.
Suppose that the parameters $t_a$ are constrained in such a way
that the unit circle $\T$ separates the sequences of poles converging to zero
$z_j=0$ in the expression $\langle \varphi, {\mathcal D}\psi \rangle$
from their partners going to infinity. Then the  operator \eqref{rvD}
is symmetric with respect to the above inner product,
$$
\langle \varphi, {\mathcal D}\psi \rangle
=\langle {\mathcal D}\varphi, \psi \rangle.
$$
Surprisingly, this statement requires a rather complicated computation
associated with the presence of the powers $q^r$ in the arguments of
theta functions. Because the suggested generalization of the van Diejen
operator does not touch its analytical structure (or, more precisely, does
not change the divisor structure of the functional coefficients entering it),
our operator should define a completely integrable quantum many body system as well
(in the sense that there exist $d$ commuting finite-difference operators of a similar
form of the higher order in the shifting operators $T_{p,j}S_j^{-1}$). One can remove
the balancing condition and consider a more general model, but this leads to
a substantial complication of the form of the operator ${\mathcal D}$
which requires a separate consideration.

The function $f(z_k,m_k)=1$ is an evident $\lambda=0$ solution
of the standard eigenvalue problem for the operator \eqref{rvD},
${\mathcal D}f(z_k,m_k)=\lambda f(z_k,m_k).$
The norm of this eigenfunction
$$
\langle 1, 1 \rangle=V_0^{(r)}(t,t_a,n,n_a;p,q)
$$
is exactly the type II multivariable analogue of the rarefied elliptic hypergeometric
function for the root system $C_d$ described above.

Another application to eigenvalue problems comes from
comparing the operator ${\mathcal D}$ with the rarefied elliptic
hypergeometric equation in the form \eqref{reheq_sym}. One can see that
the latter equation represents the eigenvalue problem for the
operator ${\mathcal D}$ with the following
three special restrictions: 1) $d=1$, 2) $t_2=t_1pq^{1-r}$, 3) $\lambda=-\nu$,
i.e. the $d=1$ function $V_{\epsilon=0}$ is now interpreted as an eigenfunction
of ${\mathcal D}$ with an additional restriction on the parameters
and a particular eigenvalue.
This is completely similar to the situation taking place in the $r=1$ case
\cite{spi:thesis,spi:tmf2007}.

\section{Conclusion}

In this paper we have proved several identities for the rarefied elliptic
hypergeometric functions and formulated a few related conjectures.
Summarizing them it is natural to
expect that all exact relations either proven \cite{bult:quadr,rai:trans,spi:umnrev}
or conjectured \cite{SV1,SV2} have rarefied analogues
obtained simply by replacing the elliptic gamma functions
$\Gamma(z;p,q)$ to $\Gamma^{(r)}(z,m;p,q)$  and integrations
$\int_{\T^d}$ to $\sum_{m_1,\ldots,m_d=0}^{r-1}\int_{\T^d}$.
If true, this yields a tremendous amount of new handbook formulae.
Furthermore, one can consider their various degeneration limits
enlarging further the number of exact formulae.
Indeed, the elliptic hypergeometric integrals can be reduced to
hyperbolic integrals \cite{JM,rai:limits}, which corresponds to the reduction of $4d$
superconformal indices to $3d$ partition functions \cite{RR2016}. Applying
a similar limit to the rarefied versions of these integrals,
one gets the rarefied hyperbolic integrals, or $3d$ partition functions
on the squashed lens space, which was mentioned already in \cite{ben}.
Some of such functions were considered recently in \cite{GahKels}.

As we have shown, the $p\to 0$ limit of the rarefied elliptic hypergeometric
functions lead to new $q$-hypergeometric identities requiring proper
systematic investigation. One can degenerate our sums of integrals to
terminating rarefied elliptic hypergeometric series and consider their
$p\to 0$ degenerations as well.

Let us shift the discrete summation variables $m_\ell \to m_\ell - [r/2]$,
where $[x]$ is the integral part of the real variable $x$, and take the limit
$r\to\infty$. Such a limit describes a degeneration of superconformal
indices of four-dimensional theories on $S^1\times L(r,-1)$ to
superconformal indices of three-dimensional field theories on the squashed
three sphere \cite{ben}. Again, this yields a very large number of exact identities
for corresponding infinite bilateral sums of $q$-hypergeometric integrals,
similar to the simplest case considered in \cite{kels}.
As shown in \cite{acm}, partition functions of $4d$ supersymmetric field
theories on $S^1\times S^3$ space-time
are equal to the corresponding superconformal indices up to an exponential
of the Casimir energy. It is natural to expect that similar situation
holds for partition functions on lens spaces and the rarefied elliptic
hypergeometric functions.

The curious discrete variable  $\epsilon$ emerging for $r>1$ deforms
various symmetries of the original elliptic hypergeometric integrals.
It breaks the $SU(n)$ (or $Sp(2n)$) gauge symmetry by
mixing with a global $U(1)$ group which acts nontrivially only on the lens space
holonomies. It plays also an important role in the definition of the
action of full $W(E_7)$ group on the vectors from the discrete space $\mathbb{Z}^8$
and related reflections acting in the space $\mathbb{Z}^{2n+2m+4}$.
So, a proper interpretation of the discrete variable $\epsilon$
in terms of the lens space superconformal indices
is one of interesting physical questions.

The integrals considered in this work are related only to the simplest lens space $L(r,-1)$.
It is possible to extend them to the general lens space $L(r,k)$, which adds more
discrete parameters. As to other applications of our results, let us mention
that it is not difficult to formulate a generalization of the
Bailey lemma \cite{spi:bailey2} on the basis on the rarefied elliptic
beta integral and use it for deriving a solution of the Yang-Baxter equation
that extends the $R$-operator of \cite{DS} to $r>1$
(this problem is related to the $2d$ integrable lattice model of \cite{kels}
and corresponding star-triangle relation). The same technique can be used
for generalization of the elliptic hypergeometric integral identities used
in the $2d$ topological field theories \cite{RR2016} together with many
other results for standard elliptic hypergeometric functions \cite{spi:umnrev}.

\medskip

{\bf Acknowledgements.}
This work is supported by the Russian Science Foundation (project no. 14-11-00598).
The author is indebted to A. P. Kels for explaining an equivalence
of the $\epsilon=0$ rarefied elliptic beta integral evaluation considered
in this paper to the one of \cite{kels}, as well as an equivalence
of the balancing conditions in  \cite{KY} involving $\mod r$ relation
to ours after appropriate adjustment of discrete parameters.
The referees are thanked for constructive remarks and suggestions.
A part of this work was done during a visit to the Max Planck
Institute for Mathematics in Bonn and the author is indebted to this
institute for a kind hospitality.

\end{document}